\documentclass{article}
\usepackage{graphicx} 
\usepackage{amsfonts,amsmath,amssymb,amsthm}
\title{The Law of Large Numbers for Time-inhomogeneous Markov Chains under General Conditions}
\author{Aaron Lau and Kouji Yano}
\date{}

\begin{document}
\theoremstyle{plain}
\newtheorem{Thm}{Theorem}[section]
\newtheorem{Lem}[Thm]{Lemma}
\newtheorem{Prop}[Thm]{Proposition}
\newtheorem{Cor}[Thm]{Corollary}
\newtheorem{Conj}[Thm]{Conjecture}
\theoremstyle{definition}
\newtheorem{Ass}[Thm]{Assumption}
\newtheorem{Def}[Thm]{Definition}
\newtheorem{Rem}[Thm]{Remark}
\newtheorem{Ex}[Thm]{Example}
\newtheorem{Proper}[Thm]{Property}
\numberwithin{equation}{section}
\maketitle

\begin{abstract}
The weak and strong laws of large numbers for time-inhomogeneous Markov chains are studied under general conditions. First, under Drift Condition and Contraction Condition in
total variation, we prove the weak law of large numbers. Then, assuming Drift Condition together with a time-inhomogeneous
Doeblin minorization, we develop a Nummelin-type splitting and obtain a strong
law of large numbers. Our results utilize the invariant measure family in the sense of Liu--Lu (2025), and extend the classical
Harris–ergodic LLN to the time-inhomogeneous setting.
\end{abstract}

\section{Introduction}
The Law of Large Numbers (LLN) is one of the central results in probability theory.
Beginning with the classical form for independent random variables (Borel, Kolmogorov),
it was later extended to Markov chains.  For finite-state regular Markov chains,
Kemeny--Snell~\cite{KemenySnell1976} showed that if $\pi_j$ denotes the limiting probability
of being in state $j$, then $\pi_j$ also equals the long-run empirical frequency of visits to~$j$;
the LLN therefore connects invariant measures with time averages.  Subsequent developments
established links between LLN for Markov chains and martingale convergence theorems
(see Hall--Heyde~\cite{HallHeyde1980}).

For general state-space \emph{homogeneous} Markov chains, the modern theory was developed
by Meyn--Tweedie~\cite{MeynTweedie2009}, utilizing Drift Conditions,
small sets, and Harris recurrence.  Their regeneration (or Nummelin splitting) method has since
become a canonical tool for proving limit theorems such as LLN and CLT in the presence of
dependence. Also see~\cite{Martin} for the standard theory of the Lyapunov function method to study the ergodicity of homogeneous Markov chains.

In many applications of current interest---reinforcement learning, adaptive stochastic control,
and stochastic optimization---the natural objects are no longer homogeneous Markov chains.
Their transition kernels may change with time, giving rise to \emph{time-inhomogeneous}
Markov processes.  In this setting, even basic ergodic principles are substantially more
delicate.  An invariant distribution is replaced by a \emph{family} of time-indexed invariant
measures $\{\mu_n\}_{n\in\mathbb Z}$ (See \eqref{eq:invariant_family}).  Only recently has a general ergodic theory for such
chains been developed.  Liu--Lu~\cite{LiuLu2025} established existence and uniqueness of
$\{\mu_n\}_{n\in\mathbb Z}$ and the exponential ergodicity under Drift and Contraction Conditions which is weaker than Doeblin's Condition. Note that Vassiliou~\cite{Vassiliou} obtained the Law of Large Numbers for time-inhomogeneous Markov systems on finite state spaces in the settings different from ours.

Let $(X_n)_{n\in\mathbb Z}$ be a time-inhomogeneous Markov chain satisfying Drift Condition and Doeblin's Condition, and let $\{\mu_n\}_{n\in\mathbb Z}$ denote its unique
invariant measure family.  Using a Nummelin-type splitting construction combined with a
maximal coupling argument exploiting the geometric forgetting rate of~\cite{LiuLu2025}, we
show that for \emph{any} initial state $x$ with $V(x)<\infty$ and any bounded measurable function $g:\mathcal{X}\to\mathbb{R}$,
\begin{equation*}
    \frac{1}{n}\sum_{k=0}^{n-1} g(X_k)-\frac{1}{n}\sum_{k=0}^{n-1} \mu_k(g)
    \;\xrightarrow[n \to \infty]{\ \text{a.s.}\ }\; 0 .
\end{equation*}
That is, the empirical averages of the chain converge almost surely to the averages of $g$
with respect to the evolving invariant measures.  This recovers the classical Harris-ergodic
LLN for homogeneous Markov chains as a special case, and extends the ``time average = space
average'' principle to evolving, nonstationary stochastic dynamics.

Consequently, the present results offer a theoretical foundation for Monte Carlo methods in nonstationary stochastic dynamics, where explicit invariant distributions are probably unavailable but long-time averages are of primary interest.

The organization of this paper is as follows. In Section 2, we introduce certain basic notions and earlier results about time-inhomogeneous Markov chains as preliminaries. In Section 3, we show the main results without expositing proofs. In Section 4, we prove the weak law of large numbers based on the work of Liu--Lu~\cite{LiuLu2025}, where Drift Condition and Contraction Condition are leveraged. Section 5 consists of three subsections. In Subsection 5.1, we construct a splitting chain of Nummelin type. In Subsection 5.2, we see the Drift Condition guarantees the Markov chain to return to small set with the probability of geometric tail. In Subsection 5.3, we build the strong law of large numbers based on Drift Condition and Doeblin's Condition.

\section{Preliminaries}

We recall basic definitions and notation for time–inhomogeneous Markov chains and recall some results of Liu--Lu~\cite{LiuLu2025}.

Let $\mathcal{X}$ be a Polish space equipped with its Borel $\sigma$–field $\mathcal{B}(\mathcal{X})$.
Denote by $\mathcal{P}(\mathcal{X})$ the set of all probability measures on $\mathcal{X}$. Let $P$ be a family of transition probability kernels. That is, 
for $m \le n$ and $x \in \mathcal{X}$, we have
\[
    P(m,x,n,\cdot)\in \mathcal{P}(\mathcal{X}),
\]
with
$$P(m,x,m,\cdot)= \epsilon_x,$$
where $\epsilon_x$ denotes the Dirac mass at $x$, 
and the Chapman--Kolmogorov identity:
$$P(m_1,x,m_3,A)=\int_{\mathcal{X}}P(m_1,x,m_2,dy)P(m_2,y,m_3,A),$$
for $ m_1\le m_2\le m_3, \forall
m_1,m_2,m_3\in \mathbb{Z}$, $x\in \mathcal{X}$ and $A \in \mathcal{B}(\mathcal{X})$.

We want to consider an inhomogeneous Markov chain $\{X_n\}_{n\in \mathbb{Z}_+}$ with the transition probability kernels $P$, that is,
\begin{equation}\label{MarkovChain}
\mathbb{P}(X_{n+1} \in A \mid X_j:j\le n)= P(n,X_n, n+1, A)\quad a.s.
\end{equation}
for $n\in \mathbb{Z}_+$ and $A \in \mathcal{B}(\mathcal{X})$.

Given a probability measure $\nu \in \mathcal{P}(\mathcal{X})$, we define the push-forward
of $\nu$ under the transition kernel from time $m$ to $n$ by
\begin{equation}
    P^*_{m,n}\,\nu(A)
    := \int_{\mathcal{X}} P(m,x,n,A)\,\nu(dx),
    \qquad A \in \mathcal{B}(\mathcal{X}).
    \label{eq:pushforward}
\end{equation}
The associated
transition semigroup acts by
\begin{equation}
    P_{m,n}f(x)
    := \int_{\mathcal{X}} f(y)\, P(m,x,n,dy).
    \label{eq:semigroup}
\end{equation}
for any bounded measurable function $f : \mathcal{X} \to \mathbb{R}$.

\begin{Def}[Liu--Lu~\cite{LiuLu2025}]
A family of probability measures
$\{\mu_n\}_{n \in \mathbb{Z}} \subset \mathcal{P}(\mathcal{X})$
is called an \emph{invariant measure family} for $P$ if
\begin{equation}
    P^*_{m,n}\,\mu_m = \mu_n,
    \qquad \text{for all } m \le n.
    \label{eq:invariant_family}
\end{equation}
\end{Def}

\medskip
Although we are primarily interested in the forward chain $(X_n)_{n\ge 0}$, we assume the family of kernels $P(n,x,m,\cdot)_{n\le m}$ is defined for all $n \in \mathbb{Z}$. This allows us to use the invariant measure family framework of Liu--Lu~\cite{LiuLu2025}, which is naturally formulated on $\mathbb{Z}$. The results then apply in particular to the restriction of the chain to $\mathbb{Z}_+$.
If such a family $\{\mu_n\}_{n\in \mathbb{Z}}$ exists, then there exists an inhomogeneous Markov chain $\{X_n\}_{n \in\mathbb{Z}_+}$ such that the distribution of $X_n$ is $\mu_n$ for all $n \in \mathbb{Z}_+$.

The supremum norm $\|\cdot\|_{\infty}$ is denoted by
\[
    \|\varphi\|_\infty := \sup_{x \in \mathcal{X}} |\varphi(x)|
\]
for a measurable function $\varphi : \mathcal{X} \to \mathbb{R}$.
The total variation norm of $\mu$ is denoted by
\begin{equation}
    \|\mu\|_{\mathrm{TV}}
    := \sup_{\|\varphi\|_\infty \le 1}
        \left| \int_{\mathcal{X}} \varphi(x)\,\mu(dx) \right|
     = \sup_{A \in \mathcal{B}(\mathcal{X})} \mu(A)-\inf_{A \in \mathcal{B}(\mathcal{X})} \mu(A)
    \label{eq:TV_norm}
\end{equation}
for a finite signed measure on $\mathcal{X}$.
The difference of two probability measures is always a finite signed measure,
hence the total variation distance
\[
    \|\nu_1 - \nu_2\|_{\mathrm{TV}}
\]
is well-defined on $\mathcal{P}(\mathcal{X})$.

\medskip

\begin{Ass}[Drift Condition]\label{DriftCondition} There exists a Borel measurable function $V:\mathcal{X}\to [0,\infty]$ such that
\begin{enumerate}
    \item[(i)] the set $\{x:V(x)<\infty\}$ is nonempty;
    \item[(ii)] there exist $0<\gamma<1$ and $C>0$ such that for any $x\in\mathcal{X}$ and $n\in\mathbb{Z}$,
    $$\int_{\mathcal{X}}V(y)P(n-1,x,n,dy)\leq \gamma V(x)+C.$$
\end{enumerate}

For any $R>0$, define the set $$\mathcal{C}_R:=\{(x,y):V(x)+V(y)\leq R\}.$$

\end{Ass}
\begin{Ass}[Contraction Condition]\label{ContractionCondition}  For any $R>0$, there exist constants $n_0=n_0(R)\in \mathbb{N}$ and $0<\delta=\delta(R)<1$ such that for any $n\in \mathbb{Z}$,
$$\underset{(x,y)\in\mathcal{C}_R}{\sup}\|P(n-n_0,x,n,\cdot)-P(n-n_0,y,n,\cdot)\|_{TV}\leq 2(1-\delta),$$
which is equivalent to
$$\left| \int_{\mathcal{X}}\varphi(z)P(n-n_0,x,n,dz)-\varphi(z)P(n-n_0,y,n,dz)\right|\leq 2(1-\delta)$$
uniformly over all measurable functions $\varphi:\mathcal{X}\to\mathbb{R}$ with $\|\varphi\|_{\infty}\leq 1$.\\
\end{Ass}

Denote the weighted supremum norm $\|\cdot\|$ with respect to the function $V:\mathcal{X}\to[0,\infty]$ by
$$\|\varphi\|:= \underset{x\in \mathcal{X}}{\sup}\frac{|\varphi(x)|}{1+V(x)}.$$

Liu--Lu\cite{LiuLu2025} showed the existence and uniqueness of the invariant measure family for $P$ and exponential ergodicity under the above assumptions.

\begin{Thm}[Liu--Lu~\cite{LiuLu2025}]\label{InvariantExistence} Suppose Assumption~\ref{DriftCondition} and Assumption~\ref{ContractionCondition} hold. Then there exists a unique sequence $\{\mu_n\}_{n\in \mathbb{Z}}$ of probability measures satisfying $$\int_{\mathcal{X}}V(x)\mu_n(dx)<+\infty$$ for any $n\in \mathbb{Z}$ such that
$$P^*_{m,n}\mu_m = \mu_n \text{  for any }  m\leq n.$$
Moreover, the following assertions hold:
\begin{enumerate}
    \item[\textbf{(E1)}] there exist constants $0<\alpha <1$ and $M>0$ such that
    $$\left\|\int_{\mathcal{X}}\varphi(y)P(n-m,\cdot,n,dy)-\int_{\mathcal{X}}\varphi(y)\mu_n(dy)\right\|\leq M\alpha^m \left\|\varphi-\int_{\mathcal{X}}\varphi(y)\mu_n(dy)\right\|$$
    for any $n\in \mathbb{Z},m\in\mathbb{N}$ and $x\in \mathcal{X}$.
    \item[\textbf{(E2)}] there exist constants $0<\alpha<1$ and $\tilde{M}>0$ such that
    $$\|P(n-m,x,n,\cdot)-\mu_n\|_{TV}\leq \tilde{M}\alpha^m(1+V(x))$$
    for any $n\in \mathbb{Z},m\in\mathbb{N}$ and $x\in \mathcal{X}.$
\end{enumerate}
\end{Thm}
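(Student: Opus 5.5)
The plan follows the Harris-type argument for homogeneous chains, with a weighted-norm contraction and the drift bound used at every time index. It has four steps: a weighted semi-norm contraction, a Cauchy argument for the existence of $\mu_n$, an identification argument for uniqueness, and the deduction of (E1) and (E2).

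\emph{Step 1: weighted contraction.} For $\beta>0$ introduce
\[
d_\beta(x,y)=\bigl(2+\beta V(x)+\beta V(y)\bigr)\mathbf 1_{x\neq y}
\]
and the dual Lipschitz semi-norm $\|\varphi\|_\beta=\sup_{x\ne y}|\varphi(x)-\varphi(y)|/d_\beta(x,y)$. The goal is to find $\beta$, an integer $N$ and $\bar\alpha<1$ such that
\[
\|P_{n-N,n}\varphi\|_\beta\le\bar\alpha\|\varphi\|_\beta \quad\text{for all } n\in\mathbb Z.
\]
\begin{itemize}
\item Iterating Assumption~\ref{DriftCondition} gives $P_{n-k,n}V\le\gamma^kV+C/(1-\gamma)$ for every $n$ and $k$.
\item Choose $R$ large, e.g.\ $R>4C/(1-\gamma)$, and set $n_0=n_0(R)$ from Assumption~\ref{ContractionCondition}. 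Take $N=n_0$, enlarging the window if necessary so that $\gamma^N$ is small.
\item Split into two cases, following Hairer--Mattingly. If $(x,y)\notin\mathcal C_R$, the drift alone makes $P_{n-N,n}$ shrink $\beta(V(x)+V(y))$ by a factor close to $\gamma^N$, and this absorbs the constant $2$. If $(x,y)\in\mathcal C_R$, Assumption~\ref{ContractionCondition} yields a coupling that coalesces with probability at least $\delta$, and choosing $\beta$ small makes the $V$-terms negligible compared to the gain $2\delta$.
\end{itemize}
The constants involved are uniform in $n$ because both assumptions are uniform in $n$, so $N$ and $\bar\alpha$ do not depend on $n$. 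The usual variant that uses the full window $n_0$ first and then finishes the remaining steps with the drift estimate handles the case where $N$ is not a multiple of $n_0$.

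\emph{Step 2: existence.} Fix a point $x_0$ with $V(x_0)<\infty$, which exists by Assumption~\ref{DriftCondition}(i). For fixed $n$, consider the measures $\nu_{n,m}:=P(n-m,x_0,n,\cdot)$ as $m\to\infty$. By Step 1 and the dual formulation, the weighted distance $W_{d_\beta}$ satisfies
\[
W_{d_\beta}(\nu_{n,m},\nu_{n,m+k})\le C'\bar\alpha^{\lfloor m/N\rfloor}\bigl(1+\sup_j P_{n-m-k,n-m}V(x_0)\bigr),
\]
and the supremum is bounded by $V(x_0)+C/(1-\gamma)$. Hence $(\nu_{n,m})_m$ is Cauchy in the complete space of measures with finite first $V$-moment under $W_{d_\beta}$, which dominates total variation. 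Let $\mu_n$ be its limit. Fatou's lemma gives $\mu_n(V)\le C/(1-\gamma)<\infty$. Invariance follows from
\[
P^*_{m,n}\nu_{m,j}=\nu_{n,j+n-m}
\]
together with continuity of $P^*_{m,n}$ in $W_{d_\beta}$, which is a consequence of the drift bound.

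\emph{Step 3: uniqueness.} Suppose $\{\mu'_n\}$ is another invariant family with $\mu'_n(V)<\infty$ for all $n$. Then
\[
\|\mu'_n-\mu_n\|_{TV}\le W_{d_\beta}\bigl(P^*_{n-m,n}\mu'_{n-m},P^*_{n-m,n}\mu_{n-m}\bigr)\lesssim\bar\alpha^{m/N}\bigl(\mu'_{n-m}(V)+\mu_{n-m}(V)+1\bigr).$$
The main obstacle sits here. It is not a priori clear that $\mu'_{n-m}(V)$ stays bounded as $m\to\infty$, since only finiteness at each time is assumed. The fix I would try first is to apply the drift backwards: $\mu'_n(V)=P^*_{n-m,n}\mu'_{n-m}(V)\le\gamma^m\mu'_{n-m}(V)+C/(1-\gamma)$ does not bound $\mu'_{n-m}(V)$ directly. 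So instead I would contract in a mixed way, by truncation. Write $\mu'_{n-m}=\mu'_{n-m}|_{\{V\le L\}}+\text{remainder}$; the truncated part is controlled by $L$, and the remainder has small mass once $L$ is large. The delicate step is making this uniform in $m$, because the tightness of $\mu'_{n-m}$ along $m$ is not given. If this fails, I would interpret the uniqueness class as families with $\sup_n\mu_n(V)<\infty$, which is the natural reading, and under that reading the bound above is immediate.

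\emph{Step 4: (E1) and (E2).} Both follow by applying Step 1 to the pair $(x,\mu_n)$. Since $\mu_n(V)\le C/(1-\gamma)$, coupling $\epsilon_x$ with $\mu_{n-m}$ gives
\[
\|P(n-m,x,n,\cdot)-\mu_n\|_{TV}\le C''\bar\alpha^{m/N}(1+V(x)).
\]
This is (E2) with $\alpha=\bar\alpha^{1/N}$. For (E1), let $\varphi$ satisfy $\|\varphi-\mu_n(\varphi)\|<\infty$. Integrating the same coupling estimate against $\varphi$, and using that $\varphi$ is controlled by $1+V$, bounds $|P_{n-m,n}\varphi(x)-\mu_n(\varphi)|$ by $M\alpha^m(1+V(x))\,\|\varphi-\mu_n(\varphi)\|$. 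To get this from Step 1, apply it with the weighted distance $1+V$ in place of the $2$ in $d_\beta$; this is the standard weighted Harris theorem, and the same two-case argument goes through.
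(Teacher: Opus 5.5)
The paper gives no proof of this theorem (it is imported from Liu--Lu), so there is no in-paper route to compare yours with. Judged on its own, your Steps 1, 2 and 4 are a sound time-inhomogeneous version of the Hairer--Mattingly weighted Harris theorem:
\begin{itemize}
\item all constants are uniform in $n$;
\item enlarging the window beyond $n_0$ is harmless, because the extra steps are non-expansive in total variation;
\item the pullback limits of $P(n-m,x_0,n,\cdot)$ as $m\to\infty$ produce the family;
\item Fatou gives $\mu_n(V)\le C/(1-\gamma)$ uniformly in $n$.
\end{itemize}
One correction in Step 4: do not redo the two-case argument with weight $1+V$ (that is, $\beta=1$), since the $\mathcal C_R$ case genuinely needs $\beta$ small. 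Instead use $W_{d_\beta}(\mu,\nu)=\int(1+\beta V)\,d|\mu-\nu|\ge\min(1,\beta)\int(1+V)\,d|\mu-\nu|$. Then Step 1 applied to $(\epsilon_x,\mu_{n-m})$ already bounds $|P_{n-m,n}\varphi(x)-\mu_n(\varphi)|$ by a constant times $\bar\alpha^{m/N}(1+V(x))\|\varphi-\mu_n(\varphi)\|$, which is (E1). A pure total-variation estimate would not control unbounded $\varphi$.

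The real issue is Step 3, and the truncation fix cannot succeed. Uniqueness in the class literally written in the statement, where only $\mu_n(V)<\infty$ for each $n$ is required, is false. Take $\mathcal X=\mathbb R$, $V(x)=|x|$, and the homogeneous chain $X_{n+1}=\gamma X_n+\xi_{n+1}$ with $(\xi_k)$ i.i.d.\ $N(0,1)$.
\begin{itemize}
\item Drift: $\int V(y)P(n-1,x,n,dy)\le\gamma|x|+\sqrt{2/\pi}$.
\item Contraction: for $|x|+|y|\le R$, the laws $N(\gamma x,1)$ and $N(\gamma y,1)$ are at total variation distance bounded away from $2$.
\end{itemize}
So Assumptions~\ref{DriftCondition} and~\ref{ContractionCondition} hold with $n_0=1$, and even Assumption~\ref{DoeblinCondition} holds. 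Put $\sigma^2=(1-\gamma^2)^{-1}$. For every $c\in\mathbb R$ the family
\[
\mu^{(c)}_n:=N\bigl(c\gamma^n,\sigma^2\bigr),\qquad n\in\mathbb Z,
\]
is invariant. Indeed, if $X\sim N(c\gamma^n,\sigma^2)$ is independent of $\xi\sim N(0,1)$, then $\gamma X+\xi\sim N(c\gamma^{n+1},\gamma^2\sigma^2+1)=N(c\gamma^{n+1},\sigma^2)$. Each $\mu^{(c)}_n(V)$ is finite, yet the families are pairwise distinct. They are entrance laws ``from infinity'': unless $c=0$, $\mu^{(c)}_n(V)\ge|c|\gamma^n\to\infty$ as $n\to-\infty$. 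This is exactly the backward blow-up you were worried about.

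So the uniqueness assertion has to be read in the class $\sup_n\mu_n(V)<\infty$. More generally, it suffices that $\liminf_{m\to\infty}\bar\alpha^{m/N}\mu'_{n-m}(V)=0$ for each $n$. Your fallback is therefore not a weakening but the correct statement. With it, your Step 3 estimate completes the proof, and your Fatou bound shows that the constructed family lies in this class. The same bound also shows that the uniform-moment hypothesis in Theorem~\ref{WLLN} holds automatically for this family.
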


\section{Main Results}

In this setion, we present the main results regarding the Weak Law of Large Numbers (WLLN) and the Strong Law of Large Numbers (SLLN).

We first state the WLLN under Drift and Contraction Conditions.

\begin{Thm}\label{WLLN} Suppose Assumptions~\ref{DriftCondition} and~\ref{ContractionCondition} hold. For the invariant measure family $(\mu_n)_{n \in \mathbb{Z}}$ given by Theorem~\ref{InvariantExistence}, assume that we have the uniform finite $V$-moment: 
$$\underset{n\in \mathbb{N}}{\sup}\int_{\mathcal{X}}V(x)\mu_n(dx)<\infty.$$
Let $(X_n)_{n\in \mathbb{Z}_+}$ denote the time-inhomogeneous Markov chain with transition probability $P$ such that $X_0 \sim \mu_0$ (Consequently, $X_n \sim \mu_n$ for all $n \in \mathbb{Z}_+$). Let $g$ be a bounded measurable function on $\mathcal{X}$. Then the following convergence in probability holds:
$$\frac{1}{n}\sum\limits^{n-1}_{k=0}g(X_k)-\frac{1}{n}\sum\limits^{n-1}_{k=0}\mu_k(g)\xrightarrow[n\to\infty]{\mathbb{P}}0.$$
\end{Thm}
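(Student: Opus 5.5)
We will show the $L^2$ bound
\[
\mathbb{E}\Bigl[\Bigl(\frac1n\sum_{k=0}^{n-1}\bigl(g(X_k)-\mu_k(g)\bigr)\Bigr)^2\Bigr]\longrightarrow 0 ,
\]
and then conclude by Chebyshev's inequality.

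Write $\bar g_k:=g-\mu_k(g)$. Since $X_k\sim\mu_k$, each $\bar g_k(X_k)$ has mean zero. Expanding the square gives
\[
\frac{1}{n^2}\sum_{k=0}^{n-1}\mathbb{E}[\bar g_k(X_k)^2]+\frac{2}{n^2}\sum_{0\le j<k\le n-1}\mathbb{E}[\bar g_j(X_j)\bar g_k(X_k)].
\]
The diagonal part is at most $4\|g\|_\infty^2/n$.

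For the cross terms with $j<k$, condition on $\mathcal{F}_j=\sigma(X_0,\dots,X_j)$. The Markov property \eqref{MarkovChain}, iterated with Chapman--Kolmogorov, gives
\[
\mathbb{E}[\bar g_k(X_k)\mid \mathcal F_j]=P_{j,k}g(X_j)-\mu_k(g).
\]
Apply (E2) of Theorem~\ref{InvariantExistence} with $n=k$ and $m=k-j$:
\[
|P_{j,k}g(x)-\mu_k(g)|\le \|g\|_\infty\,\|P(j,x,k,\cdot)-\mu_k\|_{\mathrm{TV}}\le \|g\|_\infty\tilde M\alpha^{k-j}(1+V(x)).
\]
This uses the TV-norm definition \eqref{eq:TV_norm}. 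Hence
\[
|\mathbb{E}[\bar g_j(X_j)\bar g_k(X_k)]|\le 2\|g\|_\infty^2\tilde M\alpha^{k-j}\bigl(1+\mu_j(V)\bigr)\le 2\|g\|_\infty^2\tilde M\alpha^{k-j}(1+K),
\]
where $K:=\sup_n\mu_n(V)<\infty$ is exactly the uniform moment hypothesis. Note that $\mu_j(V)<\infty$ also guarantees $V(X_j)<\infty$ a.s., so nothing degenerate happens where $V=\infty$.

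Summing over $k>j$ gives at most $2\|g\|_\infty^2\tilde M(1+K)\alpha/(1-\alpha)$ for each $j$. So the cross terms total $O(n)$, and after dividing by $n^2$ they contribute $O(1/n)$. The second moment is therefore $O(1/n)\to0$, and Chebyshev yields convergence in probability.

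The main point to check carefully is the conditioning step, namely that $\mathbb{E}[h(X_k)\mid\mathcal F_j]=P_{j,k}h(X_j)$ for the kernel family. This follows by induction from the one-step Markov property \eqref{MarkovChain} together with Chapman--Kolmogorov. The other thing to check is that (E2) is applied with the correct index convention, $P(n-m,x,n,\cdot)$ with $n=k$ and $m=k-j$. The uniform moment assumption is what makes the covariance bound summable uniformly in $j$; without it we would only control the covariances through $\mu_j(V)$, which could grow with $j$.
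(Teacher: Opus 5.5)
Your proposal is correct and follows essentially the same route as the paper: both bound the cross terms by $C\alpha^{k-j}$ using (E2) together with the uniform $V$-moment, conclude that the variance of $\sum_{k<n} g(X_k)$ is $O(n)$, and finish with Chebyshev's inequality. The only difference is that you condition on the full past $\mathcal{F}_j$ and carry a harmless extra factor $2$ from $|\bar g_j|\le 2\|g\|_\infty$.
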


\noindent The proof will be given in Section 4.

\medskip

In order to study the SLLN, we use Doeblin's Condition to do Nummelin's splitting coupling~\cite{Nummelin}. As a small set, we define, for $R>0$, $$\mathcal{C}(R):=\{x\in \mathcal{X}:V(x)\leq R\}.$$ 
\begin{Ass}[Doeblin's Condition]\label{DoeblinCondition} For any $R>0$, there exist $0< \beta<1$, and a probability measure $\nu$ such that for any $n \in \mathbb{Z}$ and $x\in \mathcal{C}(R)$ we have
$$P(n-1,x,n,\cdot)\geq \beta \nu(\cdot).$$
\end{Ass}

\medskip

We next state the SLLN under Drift and Doeblin's Condition.

\begin{Thm}\label{generalLLN}
    Suppose Assumptions~\ref{DriftCondition} and~\ref{DoeblinCondition} hold. Let $x\in \mathcal{X}$ be such that $V(x) <\infty$. Let $g:\mathcal{X}\to \mathbb{R}$ be a bounded measurable function. Then, for the time-inhomogeneous Markov chain $(X_n)_{n\in \mathbb{Z}_+}$ with $X_0\sim \epsilon_x$, we have
     $$    \frac{1}{n}\sum_{k=0}^{n-1} g(X_k)-\frac{1}{n}\sum_{k=0}^{n-1} \mu_k(g)
    \;\xrightarrow[n \to \infty]{\ \text{a.s.}\ }\; 0 .$$
\end{Thm}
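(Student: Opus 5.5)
We plan to prove Theorem~\ref{generalLLN} by a Nummelin-type splitting that turns the chain into a regenerative process, together with a martingale/blockwise argument. Throughout, $\{\mu_n\}$ is the invariant measure family of Theorem~\ref{InvariantExistence}. Its existence needs the Contraction Condition, and we will derive that condition from Assumptions~\ref{DriftCondition} and~\ref{DoeblinCondition}. Fix $R$ large, e.g.\ $R > 2C/(1-\gamma)$. For $(x,y)\in\mathcal C_R$ both $x,y\in\mathcal C(R)$, so the Doeblin minorization gives
\[
\|P(n-1,x,n,\cdot)-P(n-1,y,n,\cdot)\|_{TV}\le 2(1-\beta),
\]
which is Assumption~\ref{ContractionCondition} with $n_0=1$. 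Hence (E1), (E2) are available.

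\textbf{Splitting construction.} On an enlarged space we build $(X_n,Y_n)$ with $Y_n\in\{0,1\}$. Whenever $X_n\in\mathcal C(R)$, we flip a $\beta$-coin. On success ($Y_n=1$) we draw $X_{n+1}\sim\nu$. On failure we draw $X_{n+1}$ from the residual kernel
\[
\frac{P(n,X_n,n+1,\cdot)-\beta\nu}{1-\beta}.
\]
When $X_n\notin\mathcal C(R)$ we use $P(n,X_n,n+1,\cdot)$. The marginal $(X_n)$ is then the original chain. The times $T_1<T_2<\dots$ with $Y_{T_i}=1$ are regeneration times: $X_{T_i+1}\sim\nu$ independently of the past. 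The chain after $T_i$ still depends on the absolute time $T_i+1$, so the blocks are independent but not identically distributed.

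\textbf{Tail bounds for regeneration gaps.} From the drift condition with $R$ large, $V(X_n)$ contracts outside $\mathcal C(R)$. The standard Lyapunov argument, applied to $\lambda^{-n}V(X_n)$ with $\gamma<\lambda<1$ stopped at the hitting time of $\mathcal C(R)$, gives
\[
\mathbb P(\tau_{\mathcal C(R)}>k\mid X_m=z)\le c\,(1+V(z))\rho^k
\]
uniformly in the starting time $m$. Each visit to $\mathcal C(R)$ succeeds with probability $\beta$, and after a failure the next position has $V$-moment bounded by $\gamma R+C$ divided by $1-\beta$, uniformly in time. It follows that the gaps $T_{i+1}-T_i$ have geometric tails uniformly in $i$ and in the absolute time. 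Also $\nu(V)<\infty$, since $\beta\nu(V)\le\gamma R+C$. Consequently $\sup_i\mathbb E[(T_{i+1}-T_i)^2]<\infty$, $T_1<\infty$ a.s., and $\mathbb E[(T_1)^2]<\infty$ since $V(x)<\infty$.

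\textbf{Decomposition and SLLN.} Write $\bar g_k:=g-\mu_k(g)$ and
\[
S_n=\sum_{k<n}\bar g_k(X_k).
\]
We take $\mathcal F_n:=\sigma(X_j,Y_j: j\le n)$ and use a Poisson-equation martingale with time-dependent solution
\[
h_k(z):=\sum_{j\ge k}P_{k,j}\bar g_j(z).
\]
By (E2), $|P_{k,j}\bar g_j(z)|\le\|g\|_\infty\tilde M\alpha^{j-k}(1+V(z))$, since $\mu_j(g)=\int\mu_j(dz)$ of the same quantity. Hence $|h_k(z)|\le K(1+V(z))$. Then $\bar g_k(X_k)=h_k(X_k)-P_{k,k+1}h_{k+1}(X_k)$, so
\[
S_n = M_n + h_0(X_0) - h_n(X_n),
\]
where $M_n=\sum_{k=1}^{n}\bigl[h_k(X_k)-P_{k-1,k}h_k(X_{k-1})\bigr]$ is a martingale. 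The drift condition gives $\sup_n\mathbb E V(X_n)^{2}<\infty$ after iterating the drift for $V^{1/2}$; here one can replace $V$ by $\sqrt V$, which satisfies a drift condition by Jensen, so that the increments have bounded second moments. The martingale SLLN (Kolmogorov's criterion $\sum\mathbb E[\Delta_k^2]/k^2<\infty$) then yields $M_n/n\to0$ a.s. For the boundary term we need $V(X_n)/n\to0$ a.s. This follows from Borel--Cantelli, since $\mathbb P(V(X_n)>\varepsilon n)\le \mathbb E V(X_n)^2/(\varepsilon n)^2$ is summable.

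\textbf{Role of the splitting and main obstacle.} The splitting enters if the moment route fails. In that case one writes $S_n$ blockwise over $[T_i,T_{i+1})$. The block sums are independent, centered up to an error controlled by (E2) and the $\nu$-start, and have uniformly bounded second moments, so Kolmogorov's SLLN applies to them, and the last incomplete block is $o(n)$ a.s. by the geometric tails. The main obstacle I expect is controlling the moments uniformly in time. Either we must get second moments of $h_k(X_k)$ (via $\sqrt V$ weighting in (E2)), or we must show the block means are asymptotically centered despite inhomogeneity. I would first try the martingale route with $\sqrt V$, which needs the re-derivation of (E2) for the weight $1+\sqrt V$, using the drift for $\sqrt V$ with $\sqrt\gamma$ and $\sqrt C$.
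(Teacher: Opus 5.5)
Your primary route is correct and genuinely different from the paper's. It uses the time-dependent Poisson solution $h_k=\sum_{j\ge k}P_{k,j}\bar g_j$, the decomposition $S_n=M_n+h_0(X_0)-h_n(X_n)$, and then the martingale SLLN.

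The paper instead argues in two stages. It first proves the SLLN for $X_0\sim\mu_0$, using the Nummelin splitting, geometric tails of regeneration gaps, and Kolmogorov's SLLN on the regeneration blocks. It then passes to $X_0=x$ through a maximal coupling whose coupling time is a.s.\ finite by exponential ergodicity.

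Your argument works directly from $\epsilon_x$ and needs neither coupling nor splitting. It uses only (E2) and first-moment drift bounds. So it proves the statement under Drift and Contraction alone, with Doeblin serving only to produce Contraction.

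It also avoids a delicate point of the block method. For a time-inhomogeneous chain, the law of the block after a regeneration time $T_i$ depends on the random value of $T_i$. Hence consecutive blocks are in general neither independent nor automatically centered at $\sum_j\mu_j(g)$ over the block. For this reason your fallback claim that ``the blocks are independent but not identically distributed'' is not right. That fallback would need a martingale-difference formulation plus a separate centering estimate.

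What the splitting buys is an explicit regenerative structure reusable for other limit theorems. For the LLN itself, your route is the cleaner one.

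When you write it up, fix three things.

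\emph{Filtration.} The martingale property holds for $\sigma(X_j:j\le n)$, or for the paper's $\mathcal{G}_n$. It fails for $\sigma(X_j,Y_j:j\le n)$: on $\{Y_{k-1}=1\}$ the conditional law of $X_k$ is $\nu$, not $P(k-1,X_{k-1},k,\cdot)$. In fact the splitting plays no role in this route.

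\emph{Moments.} The drift condition does not give $\sup_n\mathbb{E}V(X_n)^2<\infty$, and you do not need it. Apply Theorem~\ref{InvariantExistence} with $W:=\sqrt V$ in place of $V$:
\begin{itemize}
\item Jensen gives $P_{n-1,n}W\le\sqrt\gamma\,W+\sqrt C$.
\item Contraction for $W$ at level $R$ follows from Contraction (or Doeblin) for $V$ at level $R^2$, since $W(x)+W(y)\le R$ implies $V(x)+V(y)\le R^2$.
\item Uniqueness identifies the resulting family with $\mu_n$, because $\mu_n(W)\le 1+\mu_n(V)<\infty$.
\end{itemize}
So (E2) holds with weight $1+\sqrt V$ without any re-derivation. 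Hence $|h_k|\le K(1+\sqrt V)$ and $\mathbb{E}[\Delta_k^2]\le 2K^2\bigl(1+V(x)+C/(1-\gamma)\bigr)$. For the boundary term, $\mathbb{P}(\sqrt{V(X_n)}>\varepsilon n)\le \bigl(V(x)+C/(1-\gamma)\bigr)/(\varepsilon n)^2$ is summable.

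\emph{Poisson identity.} The identity $h_k=\bar g_k+P_{k,k+1}h_{k+1}$ requires exchanging $P_{k,k+1}$ with the series. This is justified by dominated convergence, using the bound $K(1+W)$ and $P_{k,k+1}W<\infty$ on $\{V<\infty\}$, together with Chapman--Kolmogorov.
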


\noindent The proof of Theorem~\ref{generalLLN} will be given in Section 5.

\medskip

Now we consider a simple example of time-dependent Markov chain satisfying the law of large numbers. 

\begin{Ex}
    Consider the transition probability on the space $\mathcal{X}=\{1,2\}$:
$$P_{n-1,n}=
\begin{pmatrix}
    P(n-1,1,n,1) & P(n-1,1,n,2)\\
    P(n-1,2,n,1) & P(n-1,2,n,2) \\
\end{pmatrix}=
\begin{pmatrix}
    1-a(n) & a(n)\\
    b(n) & 1-b(n) \\
\end{pmatrix}$$
where $$a(n)=\frac{1}{3}+\frac{\sin n}{6},\; b(n)=\frac{1}{4}+\frac{\cos n}{8},\quad n\in \mathbb{Z}.$$
We see that $\frac{1}{6}\leq a(n)\leq \frac{1}{2},\;\frac{1}{8}\leq b(n)\leq \frac{3}{8}$, thus Doeblin's Condition is satisfied with $\beta=\frac{1}{4}$ and $\nu=(1/2,1/2)$. The Drift Condition can be easily verified if we take $V:\mathcal{X} \to \mathbb{R},\;V(1)=V(2)=2$, for instance. So, the corresponding invariant measure family exists and we denote it by $(\mu_k)_{k\in\mathbb{Z}}$. From the exponential ergodicity of Theorem~\ref{InvariantExistence}, we obtain
\[
P_{-n,k}=P_{-n,-n+1}\,P_{-n+1,-n+2}\cdots P_{k-1,k} \xrightarrow[n\to\infty]{} 
\begin{pmatrix}
    \mu_k(1) & \mu_k(2) \\
    \mu_k(1)  & \mu_k(2)  \\
\end{pmatrix}
\]
for every $k\in\mathbb{Z}$. 
We consider the Markov chain $(X_k)_{k \in \mathbb{Z}_+}$ with initial value $X_0=x_0\in\{1,2\}$ and set the function $g(x)=x$. By Theorem~\ref{generalLLN}, we have the SLLN:
$$\frac{1}{n}\sum_{k=0}^{n-1}X_k - \frac{1}{n}\sum_{k=0}^{n-1}\left(\mu_k(1)+2\mu_k(2)\right)\xrightarrow[n\to \infty]{\text{a.s.}} 0.$$
In particular, if $ \frac{1}{n}\sum\limits_{k=0}^{n-1}\left(\mu_k(1)+2\mu_k(2)\right)$ converges to a constant, say $C$, we have
$$\frac{1}{n}\sum_{k=0}^{n-1}X_k \xrightarrow[n\to \infty]{\text{a.s.}} C.$$
However, we do not know whether the limit $C$ exists or not.
\end{Ex}

\section{Proof of WLLN}

We give the proof of Theorem~\ref{WLLN}.
\begin{proof}[Proof of Theorem~\ref{WLLN}] Let us compute the covariance of $g(X_i)$ and $g(X_j)$ for $i<j$. By  \textbf{(E2)}  of Theorem~\ref{InvariantExistence}, we have
\begin{align*}
    &\,\,\,\,\,\,\,\left|\mathrm{Cov}(g(X_i),g(X_j))\right| \\
    &=\left|  \mathbb{E}[g(X_i)g(X_j)]-\mathbb{E}[g(X_i)]\mathbb{E}[g(X_j)]\right| \\
    &=\left|\int_{\mathcal{X}}g(x)\left(\mathbb{E}[g(X_j)\mid X_i=x]-\mu_j(g)\right)\mathbb{P}(X_i\in dx)\right|\\
    &\leq\int_{\mathcal{X}}|g(x)|\|g\|_{\infty} \|P(i,x,j,\cdot)-\mu_j)\|_{TV}\mathbb{P}(X_i\in dx)\\
    &\leq \|g\|_{\infty}^2 \tilde{M}\alpha^{j-i}\int_{\mathcal{X}}(1+V(x))\mathbb{P}(X_i\in dx)\\
    &\leq C\alpha^{j-i},
\end{align*}
where $C:=\|g\|_{\infty}^2\,\tilde{M}\,\underset{n\in \mathbb{N}}{\sup}\int_{\mathcal{X}}(1+V(x))\mu_n(dx)<\infty$. Then we have
\begin{align*}
    \mathrm{Var}\left(\sum\limits_{k=0}^{n-1} g(X_k)\right)&=\sum_{i,j=0}^{n-1} \mathrm{Cov}(g(X_i),g(X_j))\\
    &\leq \sum_{k=0}^{n-1} \mathrm{Var}(g(X_k))+2 \sum_{0\leq i<j\leq n-1}\left|\mathrm{Cov}(g(X_i),g(X_j))\right|\\
    &\leq 2n \|g\|_{\infty}^2 + 2 \sum_{i=0}^{n-2} \sum_{j=i+1}^{n-1} C \alpha^{j-i}\\
    &\leq 2n \|g\|_{\infty}^2 +2 \sum_{i=0}^{n-1} \sum_{k=0}^{\infty} C \alpha^{k}\\
    &=2n \|g\|_{\infty}^2 +n \frac{2C\alpha}{1-\alpha}=O(n).
\end{align*}
So we obtain $Var\left(S_n(g)\right)= O(n)$. Finally by Chebyshev's inequality, for every $\varepsilon>0$, we have
\begin{align*}
    \mathbb{P}\left(\left|\frac{1}{n}\sum_{k=0}^{n-1}g(X_k)-\frac{1}{n}\sum_{k=0}^{n-1}\mu_k(g)\right|>\varepsilon\right) \leq \frac{\mathrm{Var}\left(\sum_{k=0}^{n-1}g(X_k)\right)}{n^2 \varepsilon^2}=O\left(\frac{1}{n}\right).
\end{align*}
Therefore, we obtain the desired convergence.
\end{proof}

\section{Proof of SLLN}

\subsection{Construction of splitting chains}

It is easy to see that Doeblin's Condition (Assumption~\ref{DoeblinCondition}) implies Contraction Condition (Assumption~\ref{ContractionCondition}). In fact, we introduce the probability measure $Q_x$ and $Q_y$ such that
\begin{align*}
    P(n-1,x,n,\cdot)&=\beta \nu(\cdot)+(1-\beta)Q_x\\
    P(n-1,y,n,\cdot)&=\beta \nu(\cdot)+(1-\beta)Q_y.
\end{align*}
Then we have
$$\|P(n-1,x,n,\cdot)-P(n-1,y,n,\cdot)\|_{TV}\leq (1-\beta)\|Q_x-Q_y\|_{TV}\leq 2(1-\beta),$$
which implies the Contraction Condition.

\begin{Rem}
    Contraction Condition may not imply Doeblin's Condition. A counterexample is given by $\mathcal{X}= \{1,2,3\}$, $\mathcal{C}(R)=\mathcal{X}$, and
    $$P(1,\cdot)=\frac{1}{2}(\epsilon_1 +\epsilon_2),\quad P(2,\cdot)=\frac{1}{2}(\epsilon_2 + \epsilon_3),\quad P(3,\cdot)=\frac{1}{2}(\epsilon_3 + \epsilon_1).$$
    Then $\|P(x,\cdot)-P(y,\cdot)\|_{TV}=1$ for $x\neq y$, so the contraction bound holds with $\delta= \frac{1}{2}$. However, no $\beta >0$ and probability $\nu$ can satisfy $P(x,\cdot) \ge \beta \nu(\cdot)$. See Bansaye--Cloez--Gabriel~\cite{Doeblin} for other results about Doeblin's Condition.
\end{Rem}

\medskip

From now on, we fix $ R$ such that $R>\frac{C}{(1-\gamma)^2}$ and $\mathcal{C}(R)\not= \emptyset$ in the sequel, and start to split the chain with \emph{Nummelin's splitting coupling} method~\cite{Nummelin}.
We first split the space $\mathcal{X}$ itself by writing $\check{\mathcal{X}}=\mathcal{X}\times \{0,1\}$, where $\mathcal{X}_0:=\mathcal{X}\times \{0\}$ and $\mathcal{X}_1:=\mathcal{X}\times\{1\}$ are thought of as copies of $\mathcal{X}$ equipped with copies $\mathcal{B}(\mathcal{X}_0)$, $\mathcal{B}(\mathcal{X}_1)$ of the $\sigma$-field $\mathcal{B}(\mathcal{X})$.
We let $\mathcal{B}(\check{\mathcal{X}})$ be the $\sigma$-field of subsets of $\check{X}$ generated by $\mathcal{B}(\mathcal{X}_0)$, $\mathcal{B}(\mathcal{X}_1)$: that is, $\mathcal{B}(\check{\mathcal{X}})$ is the smallest $\sigma$-field containing sets of the form $A_0:= A\times \{0\}$, $A_1:=A\times \{1\}$, $A\in \mathcal{B}(\mathcal{X})$.
We will write $x_i=(x,i), i=0,1$ for elements of $\check{\mathcal{X}}$, with $x_0$ denoting members of $\mathcal{X}_0$ and $x_1$ denoting the member of $\mathcal{X}_1$.
If $\lambda$ is any measure on $\mathcal{B}(\mathcal{X})$, then we split the measure $\lambda$ into two measures on each of $\mathcal{X}_0$ and $\mathcal{X}_1$ by defining the measure $\lambda^*$ on $\mathcal{B}(\check{\mathcal{X}})$ through
\begin{align*}
    \lambda^*(A_0) &= (1-\beta) \lambda(A \cap \mathcal{C}(R)) + \lambda(A \cap \mathcal{C}(R)^c)\\
    \lambda^*(A_1) &=\beta \lambda(A \cap \mathcal{C}(R)).
\end{align*}
It is critical to note that $\lambda$ is the marginal measure induced by $\lambda^*$, in the sense that for any $A$ in $\mathcal{B}(\mathcal{X})$ we have 
$$\lambda^*(A_0 \cup A_1)= \lambda(A).$$
We also note that
$$\lambda^*\left(\mathcal{C}(R)^c \times\{1\}\right)=0,\quad \lambda^*\left(\mathcal{C}(R) \times\{1\}\right)=\lambda(\mathcal{C}(R)).$$
Now we want to split the chain $(X_n)_{n\in \mathbb{N}}$ to form a chain $(\check{X}_n)_{n\in\mathbb{N}}$ where $\check{X}_n = (X_n, \delta_n)$, which lives on $(\check{\mathcal{X}},\mathcal{B}(\check{\mathcal{X}}))$ is a time-inhomogeneous Markov chain with the transition probabilities given by the split kernels defined by:
\begin{equation}\label{construction1}
    \check{P}(n,x_0,n+1,\cdot) = P(n,x,n+1,\cdot)^*,\quad\text{if }\; x_0\in\mathcal{X}_0-\mathcal{C}(R)_0;
\end{equation}
\begin{equation}\label{construction2}
    \check{P}(n,x_0,n+1,\cdot)=\frac{P(n,x,n+1,\cdot)^*-\beta\nu^*(\cdot)}{1-\beta},\quad\text{if }\;x_0\in\mathcal{C}(R)_0;
\end{equation}
\begin{equation}\label{construction3}
    \check{P}(n,x_1,n+1,\cdot)=\nu^*(\cdot),\quad\text{if }\;x_1\in\mathcal{C}(R)_1.
\end{equation}

Outside $\mathcal{C}(R)$ the chain $\{\check{X}_n\}$ behaves just like $\{X_n\}$, moving on $\mathcal{X}_0$ of the split space. Each time it arrives in $\mathcal{C}(R)$, it is split; with probability $1-\beta$ it remains in $\mathcal{C}(R)_0$, with probability $\beta$ it drops to $\mathcal{C}(R)_1$.
Now we denote the $\sigma$-fields: 
\begin{align*}
    \mathcal{G}_n&:=\sigma\left(X_0,X_1,\cdots,X_n,\delta_0,\delta_1,\cdots,\delta_{n-1}\right)\\
    \mathcal{F}_n&:=\sigma\left(X_0,X_1,\cdots,X_n,\delta_0,\delta_1,\cdots,\delta_{n-1},\delta_n\right).
\end{align*}

\begin{Prop}\label{Property1}
The split chain $\left\{\check{X}_n=(X_{n },\delta_n): n\in \mathbb{Z}_+\right\}$ on the space $\mathcal{X}\times\{0,1\}$ satisfies the following properties:
\begin{equation}\label{following1}
    \check{\mathbb{P}}\{\delta_n=1 \mid  \mathcal{G}_n;X_{n}=x\}=\left\{
    \begin{array}{cc}
        \beta, & \mathrm{if}\; x\in\mathcal{C}(R) \\
         0, & \mathrm{if}\;x\notin \mathcal{C}(R) 
    \end{array};
    \right. 
\end{equation}
\begin{equation}\label{following2}
    \check{\mathbb{P}}\{X_{n+1} \in dy\mid  \mathcal{F}_n;\delta_n=1\} = \nu(dy);
\end{equation}
\begin{equation}\label{following3}
    \check{\mathbb{P}}\{X_{n+1} \in dy\mid  \mathcal{F}_n;X_n=x,\delta_n=0\}= \left\{
    \begin{array}{cc}
        \frac{P(n,x,n+1,dy)-\beta \nu(dy)}{1-\beta}, & \mathrm{if}\;x\in \mathcal{C}(R) \\
         P(n,x,n+1,dy),  & \mathrm{if}\;x\notin \mathcal{C}(R) 
    \end{array}
    \right.
\end{equation}
Moreover, given that $\delta_{n}=1$, the pre-$n$ process $\{X_j,\delta_j:j\leq  n\}$ and post-$n$ $\{X_j , \delta_j: j\geq n+1\}$ process are independent.
\end{Prop}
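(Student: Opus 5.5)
The plan is to read every property directly off the split kernels \eqref{construction1}--\eqref{construction3}. The one subtlety is that the kernel $\check P(n,\check x,n+1,\cdot)$ produces the pair $(X_{n+1},\delta_{n+1})$ jointly. Its $\delta$-component is obtained by splitting the law of $X_{n+1}$ through the operation $\lambda\mapsto\lambda^*$. I will therefore first verify that the split chain is well defined. Then I will compute the conditional laws of $\delta_n$ given $\mathcal G_n$, and of $X_{n+1}$ given $\mathcal F_n$, by disintegrating $\lambda^*$.

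Step 1 (well-definedness). Doeblin's Condition (Assumption~\ref{DoeblinCondition}) gives $P(n,x,n+1,\cdot)-\beta\nu\ge0$ for $x\in\mathcal C(R)$. The map $\lambda\mapsto\lambda^*$ is linear and positivity preserving, so \eqref{construction2} is a probability measure on $\check{\mathcal X}$. The initial law is taken to be $(\epsilon_x)^*$, or $\lambda^*$ for a general initial $\lambda$. By the Ionescu-Tulcea theorem this yields a Markov chain $\check X_n$ on $\check{\mathcal X}$.

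Step 2 (\eqref{following1}). Every kernel in \eqref{construction1}--\eqref{construction3}, and the initial law, has the form $\lambda^*$ for some probability $\lambda$ that depends on $\check X_{n-1}$ (or on nothing, at $n=0$). From the definition of $\lambda^*$, the pair $(X_n,\delta_n)$ drawn from $\lambda^*$ has $X_n\sim\lambda$ and $\mathbb P(\delta_n=1\mid X_n=x)=\beta\mathbf 1_{\mathcal C(R)}(x)$. The key point is that this conditional probability does not depend on $\lambda$. Therefore, conditionally on $\mathcal G_n=\sigma(\check X_0,\dots,\check X_{n-1},X_n)$, the Markov property in the split chain gives \eqref{following1}.

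Step 3 (\eqref{following2} and \eqref{following3}). Given $\mathcal F_n$, the next pair has law $\check P(n,\check X_n,n+1,\cdot)$ by the Markov property. Its $X$-marginal is obtained from the identity $\lambda^*(A_0\cup A_1)=\lambda(A)$. This marginal equals $\nu$ if $\delta_n=1$, which uses \eqref{construction3} together with the fact that $\delta_n=1$ forces $X_n\in\mathcal C(R)$ almost surely. It equals $(P(n,x,n+1,\cdot)-\beta\nu)/(1-\beta)$ if $\delta_n=0$ and $x\in\mathcal C(R)$, and it equals $P(n,x,n+1,\cdot)$ if $\delta_n=0$ and $x\notin\mathcal C(R)$. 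As a consistency check, averaging over $\delta_n$ with weights $\beta$ and $1-\beta$ recovers $P(n,x,n+1,\cdot)$. So $X_n$ is indeed a Markov chain with kernels $P$, which justifies calling $\check X_n$ a splitting of $X_n$.

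Step 4 (regeneration). On $\{\delta_n=1\}$, the conditional law of $\check X_{n+1}$ given $\mathcal F_n$ is $\nu^*$, which is a fixed measure not depending on $\mathcal F_n$. The law of the post-$n$ process $\{\check X_j\}_{j\ge n+1}$ is then $\nu^*$ followed by the deterministic (time-indexed) split kernels. So for bounded measurable $F$ of the future and $G$ of the past we get
\[
\check{\mathbb E}[G\,F\,\mathbf 1_{\{\delta_n=1\}}]=\check{\mathbb E}[G\,\mathbf 1_{\{\delta_n=1\}}]\,\check{\mathbb E}_{n+1,\nu^*}[F],
\]
and independence follows.

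The main obstacle is the careful bookkeeping in Step 2. The issue is showing that the conditional law of $\delta_n$ given $\mathcal G_n$ depends only on $X_n$, even though $\check X_n$ was drawn jointly from a kernel that depends on $\check X_{n-1}$. I would handle this with the explicit disintegration $\lambda^*(dx\times\{i\})=\lambda(dx)\,[\beta\mathbf 1_{\mathcal C(R)}(x)]^i[1-\beta\mathbf 1_{\mathcal C(R)}(x)]^{1-i}$, which holds uniformly in $\lambda$.
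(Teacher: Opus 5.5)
Your proof is correct, but it takes a different route from the paper.

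The paper builds the split chain pathwise. Via the randomization lemma (Kallenberg, Prop.~11.6) it obtains measurable maps $\varphi^1_n,\varphi^2_n,\varphi^3$ driven by independent i.i.d.\ uniforms $(U^X_n)$ and $(U^\delta_n)$. It then sets $X_{n+1}=\varphi_n(X_n,\delta_n,U^X_{n+1})$ and $\delta_{n+1}=\mathbf 1\{U^\delta_{n+1}\le\beta\}\mathbf 1_{\mathcal C(R)}(X_{n+1})$, and computes \eqref{following1}--\eqref{following3} relative to the enlarged filtrations $\tilde{\mathcal G}_n\supset\mathcal G_n$, $\tilde{\mathcal F}_n\supset\mathcal F_n$. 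Next it checks case by case (over $\delta_n,\delta_{n+1}\in\{0,1\}$ and $X_n\in\mathcal C(R)$ or not) that the pair process has exactly the kernels \eqref{construction1}--\eqref{construction3}. Only then does it read off regeneration from the one-step law $\nu^*$, as you do.

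You instead take $\check P$ and $\lambda^*$ as primitive and get the chain from Ionescu--Tulcea. Everything then follows from the single disintegration of $\lambda^*$, whose coin probability $\beta\mathbf 1_{\mathcal C(R)}(x)$ does not depend on $\lambda$.

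Your version buys economy:
\begin{itemize}
\item the Markov property with kernels $\check P$ is built in rather than checked;
\item \eqref{following1} follows in one line;
\item arbitrary initial laws $\lambda^*$ are covered at once;
\item you never need the randomization lemma, which is where the paper uses that $\mathcal X$ is a Borel space.
\end{itemize}
The paper's version buys an explicit realization on a product of uniforms. There, after a regeneration, the future is visibly a measurable function of fresh uniforms only, which is convenient for later coupling arguments.

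One detail should be made explicit in your Step 1. \eqref{construction3} defines the kernel only on $\mathcal C(R)_1$, so before invoking Ionescu--Tulcea you should extend it arbitrarily (say by $\nu^*$) to $\mathcal C(R)^c\times\{1\}$. This set is null under every $\lambda^*$, and every kernel and initial law has that form, so the extension is never seen.
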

\begin{proof}
    We construct the split chain explicitly expositing the underlying randomness. First, we introduce auxiliary i.i.d. sequences $(U^X_n)_{n\ge 0}$ and $(U^{\delta}_n)_{n\ge 0}$ of Uniform $(0,1)$ random variables, which are independent. According to Proposition 11.6 of Kallenberg~\cite{Kallenberg}, there exist measurable functions:
    \begin{align*}
        &\varphi^1_n:\mathcal{X}\times (0,1) \to \mathcal{X}\text{ s.t. }\varphi^1_n(x,U^X_n) \sim P(n,x,n+1, \cdot);\\
        &\varphi^2_n:\mathcal{X}\times(0,1) \to \mathcal{X}\text{ s.t. }\varphi^2_n(x, U^X_n) \sim \frac{P(n,x,n+1,\cdot)-\beta\nu(\cdot)}{1-\beta};\\
        &\varphi^3:\mathcal{X} \to \mathcal{X}\text{ s.t. } \varphi^3(U^X_n) \sim\nu .
    \end{align*}
    Define $\varphi_n:\mathcal{X}\times \{0,1\}\times(0,1) \to \mathcal{X}$ as
    \[
    \varphi_n(x,i,u):=\left\{
    \begin{array}{cc}
        \varphi^1_n(x,u), & \text{if }x\notin \mathcal{C}(R) \\
         \varphi^2_n(x,u),& \text{if }x\in\mathcal{C}(R), i=0 \\
         \varphi^3 (u), & \text{if }x\in \mathcal{C}(R), i=1
    \end{array}\right.
    \]
    Let $x^0$ be fixed and Construct $(X_n,\delta_n)_{n=0}^{\infty}$ as follows:
    \begin{itemize}
        \item $X_0=x^0$, 
        $\delta_0 = \left\{
        \begin{array}{cc}
             0,&\text{if } x\notin \mathcal{C}(R) \\
             \textbf{1}\{U^{\delta}_0 \leq \beta\},& \text{if }x \in \mathcal{C}(R) 
        \end{array}
        \right.
        $
        \item If $(X_n, \delta_n)$ is given, then\\ $X_{n+1}= \varphi_n\left(X_n,\delta_n,U^x_{n+1}\right)$, $\delta_{n+1} = \left\{
        \begin{array}{cc}
             0,& \text{if }X_{n+1}\notin \mathcal{C}(R) \\
             \textbf{1}\{U^{\delta}_{n+1} \leq \beta\},& \text{if }X_{n+1}\in \mathcal{C}(R) 
        \end{array}
        \right.$
    \end{itemize}
    Denote:
    \begin{align*}
        \tilde{\mathcal{G}}_n&:=\sigma\left(U^x_0,U^x_1,\cdots,U^X_n,U^{\delta}_0,U^{\delta}_1,\cdots,U^{\delta}_{n-1}\right) \supset \mathcal{G}_n ,\\
        \tilde{\mathcal{F}}_n&:=\sigma\left(U^x_0,U^x_1,\cdots,U^X_n,U^{\delta}_0,U^{\delta}_1,\cdots,U^{\delta}_{n}\right) \supset \mathcal{F}_n .
    \end{align*}
    By such construction, we have
    \begin{align*}
        &\check{\mathbb{P}}\left(\delta_n=1 \mid \tilde{\mathcal{G}}_n,X_n=x\right)\\
        &=\check{\mathbb{P}}\left(\delta_n=1 , X_n \notin \mathcal{C}(R)\mid \tilde{\mathcal{G}}_n,X_n=x\right)+\check{\mathbb{P}}\left(\delta_n=1 ,X_n\notin \mathcal{C}(R)\mid \tilde{\mathcal{G}}_n,X_n=x\right)\\
        &=0+\check{\mathbb{P}}\left(U^{\delta}_n \le \beta \mid \tilde{\mathcal{G}}_n,X_n=x\right)\textbf{1}\{x\in \mathcal{C}(R)\}\\
        &=\beta\textbf{1}\{x\in \mathcal{C}(R)\} ,
    \end{align*}
    which proves~\eqref{following1}.
    Then, we obtain
    \begin{align*}
        &\check{\mathbb{P}}\left(X_{n+1} \in dy \mid \tilde{\mathcal{F}}_n,\delta_n=1\right)\\
        &= \check{\mathbb{P}}\left(\varphi_{n+1}\left(X_n,\delta_n,U^x_{n+1}\right) \in dy \mid \tilde{\mathcal{F}}_n,\delta_n=1\right)\\
        &=
        \check{\mathbb{P}}\left(\varphi_{n+1}\left(X_n,\delta_n,U^x_{n+1}\right) \in dy ,X_n\in \mathcal{C}(R)\mid \tilde{\mathcal{F}}_n,\delta_n=1\right)\\
        &=
        \check{\mathbb{P}}\left(\varphi^3(U^x_{n+1}) \in dy ,X_n\in \mathcal{C}(R)\mid \tilde{\mathcal{F}}_n,\delta_n=1\right)\\
        &=
        \mathbb{P}\left(\varphi^3(U^x_{n+1}) \in dy \right)\\
        &= \nu(dy) ,
    \end{align*}
    which proves~\eqref{following2}. When $x\in \mathcal{C}(R)$, we have
    \begin{align*}
        &\check{\mathbb{P}}\left(X_{n+1} \in dy \mid \tilde{\mathcal{F}}_n,X_n=x,\delta_n=0\right)\\
        &=\check{\mathbb{P}}\left(\varphi_{n+1}(X_n,\delta_n,U^x_{n+1}) \in dy ,X_n \in \mathcal{C}(R)\mid \tilde{\mathcal{F}}_n,X_n=x,\delta_n=0\right)\\
        &=\check{\mathbb{P}}\left(\varphi^2(X_n,U^x_{n+1}) \in dy ,X_n \in \mathcal{C}(R)\mid \tilde{\mathcal{F}}_n,X_n=x,\delta_n=0\right)\\
        &=\check{\mathbb{P}}\left(\varphi^2(x,U^x_{n+1}) \in dy \mid \tilde{\mathcal{F}}_n,X_n=x,\delta_n=0\right)\\
        &=\mathbb{P}\left(\varphi^2(x,U^x_{n+1}) \mid \tilde{\mathcal{F}}_n,X_n=x,\delta_n=0\right)\\
        &=
        \frac{P(n,n+1,dy)-\beta \nu(dy  )}{1-\beta} .
    \end{align*}
    Similarly, when $x\notin \mathcal{C}(R)$, we have
    $$\check{\mathbb{P}}\left(X_{n+1} \in dy \mid \tilde{\mathcal{F}}_n,X_n=x,\delta_n=0\right)=P(n,x,n+1,dy) ,$$
    which proves~\eqref{following3}.

    We now prove that $\left(\check{X}_n\right)_{n\in \mathbb{N}}=\left(X_n,\delta_n\right)_{n\in \mathbb{N}}$ is a time-inhomogeneous Markov chain with transition probability $\check{P}$ in~\eqref{construction1}--\eqref{construction3}. That is, 
    $$\check{\mathbb{P}}\left(X_{n+1} \in B, \delta_{n+1}=j \mid \tilde{\mathcal{F}}_n,X_n=x,\delta_n=i\right) = \check{P}\left(n,(x,i),n+1,B\times\{j\}\right) .$$
    When $i=1$ and $j=1$, we have
    \begin{align*}
        &\check{\mathbb{P}}\left(X_{n+1} \in B, \delta_{n+1}=1 \mid \tilde{\mathcal{F}}_n,X_n=x,\delta_n=1\right)\\
        &= \check{\mathbb{P}}\left(X_{n+1} \in B, X_{n+1}\in \mathcal{C}(R), U^{\delta}_{n+1}\le\beta \mid \tilde{\mathcal{F}}_n,X_n=x,\delta_n=1\right)\\
        &= \beta \check{\mathbb{P}}\left(\varphi^3_{n+1}(U^x_{n+1})\in B \cap \mathcal{C}(R)\right)\\
        &=\beta \nu\left(B\cap \mathcal{C}(R)\right)\\
        &=\nu^*(B_1)\\
        &=\check{P}(n,x_1,n+1,B_1) ,
    \end{align*}
    and when $i=1$ and $j=0$, we have:
    \begin{align*}
        &\check{\mathbb{P}}\left(X_{n+1} \in B, \delta_{n+1}=0 \mid \tilde{\mathcal{F}}_n,X_n=x,\delta_n=1\right)\\
        &=\check{\mathbb{P}}\left(X_{n+1} \in B, X_{n+1}\in \mathcal{C}(R), U^{\delta}_{n+1}>\beta \mid \tilde{\mathcal{F}}_n,X_n=x,\delta_n=1\right)\\
        &\quad+\check{\mathbb{P}}\left(X_{n+1} \in B, X_{n+1}\in \mathcal{C}(R)^c \mid \tilde{\mathcal{F}}_n,X_n=x,\delta_n=1\right)\\
        &= (1-\beta) \check{\mathbb{P}}\left(\varphi^3_{n+1}(U^x_{n+1}) \in B\cap \mathcal{C}(R)\right)+\check{\mathbb{P}}\left(\varphi^3_{n+1}(U^x_{n+1}) \in B\cap \mathcal{C}(R)^c\right)\\
        &=(1-\beta)\nu(C\cap \mathcal{C}(R))+\nu(C\cap \mathcal{C}(R)^c)\\
        &=\nu^*(B_0)\\
        &=\check{P}(n,x_1,n+1,B_0) .
    \end{align*}
     When $X_n=x \in \mathcal{C}(R)$, $i=0$ and $j=0$, we have 
    \begin{align*}
        &\check{\mathbb{P}}\left(X_{n+1} \in B, \delta_{n+1}=0 \mid \tilde{\mathcal{F}}_n,X_n=x,\delta_n=0\right)\\
        &=  \check{\mathbb{P}}\left(X_{n+1} \in B, X_{n+1}\in \mathcal{C}(R), U^{\delta}_{n+1}>\beta \mid \tilde{\mathcal{F}}_n,X_n=x,\delta_n=0\right)\\
        &\quad+\check{\mathbb{P}}\left(X_{n+1} \in B, X_{n+1}\in \mathcal{C}(R)^c \mid \tilde{\mathcal{F}}_n,X_n=x,\delta_n=0\right)\\
        &=(1-\beta)\check{\mathbb{P}}\left(\varphi^2_{n+1}(x,U^x_{n+1})\in B\cap \mathcal{C}(R)\right)
        + \check{\mathbb{P}}\left(\varphi^2_{n+1}(x,U^x_{n+1})\in B\cap \mathcal{C}(R)^c\right)\\
        &=
        \frac{1}{1-\beta}\left\{(1-\beta)P(n,x,n+1,B)+P(n,x,n+1,B\cap \mathcal{C}(R)^c)-\beta\left[(1-\beta)\nu(B)+\nu(B\cap \mathcal{C}(R)^c)\right]\right\}\\
        &=\check{P}(n,x_0,n+1,B_0) ,
    \end{align*}
    and when $X_n=x \in \mathcal{C}(R)$, $i=0$ and $j=1$ we have
    \begin{align*}
         &\check{\mathbb{P}}\left(X_{n+1} \in B, \delta_{n+1}=1 \mid \tilde{\mathcal{F}}_n,X_n=x,\delta_n=0\right)\\
         &=\check{\mathbb{P}}\left(X_{n+1} \in B, X_{n+1}\in \mathcal{C}(R), U^{\delta}_{n+1}\le\beta \mid \tilde{\mathcal{F}}_n,X_n=x,\delta_n=0\right)\\
         &=\beta \check{\mathbb{P}}\left(\varphi^2_{n+1}(B\cap \mathcal{C}(R))\right)\\
         &=\check{P}(n,x_0,n+1,B_1) .
    \end{align*}
    Finally, when $X_n=x\in\mathcal{C}(R)^c$ and $j=1$, we have
    \begin{align*}
        &\check{\mathbb{P}}\left(X_{n+1} \in B, \delta_{n+1}=1 \mid \tilde{\mathcal{F}}_n,X_n=x,\delta_n=0\right)\\
        &=\check{\mathbb{P}}\left(X_{n+1} \in B, X_{n+1}\in \mathcal{C}(R), U^{\delta}_{n+1}\le\beta  \mid \tilde{\mathcal{F}}_n,X_n=x,\delta_n=0\right)\\
        &=\beta \check{\mathbb{P}}\left(\varphi^1_{n+1}(B \cap \mathcal{C}(R))\right)\\
        &=\check{P}(n,x,n+1,B_1) ,
    \end{align*}
    and when $X_n=x\in\mathcal{C}(R)^c$ and $j=0$, we have
    \begin{align*}
          &\check{\mathbb{P}}\left(X_{n+1} \in B, \delta_{n+1}=0 \mid \tilde{\mathcal{F}}_n,X_n=x,\delta_n=0\right)\\
          &=\check{\mathbb{P}}\left(X_{n+1} \in B, X_{n+1}\in \mathcal{C}(R), U^{\delta}_{n+1}\le\beta  \mid \tilde{\mathcal{F}}_n,X_n=x,\delta_n=0\right)\\
          &\quad+\check{\mathbb{P}}\left(X_{n+1} \in B, X_{n+1}\in \mathcal{C}(R)^c \mid \tilde{\mathcal{F}}_n,X_n=x,\delta_n=0\right)\\
          &=\beta \check{\mathbb{P}}\left(\varphi^1_{n+1}(B\cap \mathcal{C}(R))\right)+\check{\mathbb{P}}\left(\varphi^1_{n+1}(B\cap \mathcal{C}(R))^c\right)\\
          &=\check{P}(n,x,n+1,B_0) .
    \end{align*}

    By the above argument, we obtain
    $$\check{\mathbb{P}}\left(\check{X}_{n+1}\in \cdot \mid \mathcal{F}_n,\delta_n=1\right)=\nu^*(\cdot),$$
    which implies the independence between the pre-$n$ and the post-$n$ processes given $\delta_n=1$.
\end{proof}

Let us prove that the time-inhomogeneous Markov chain $X$ given in~\eqref{MarkovChain} is identical in law to the marginal chain of the split chain.
\begin{Prop}
    The chain $(X_n)_{n \in \mathbb{Z}_+}$ is identical in law to the marginal chain $(X_n)_{n \in \mathbb{Z}_+}$ of the split chain $(\check{X}_n)_{n \in \mathbb{Z}_+}$: that is, for any initial distribution $\lambda$ on $\mathcal{B}(\mathcal{X})$ and any $E_1,\cdots,E_n \in \mathcal{B}(\mathcal{X}),\forall n\in \mathbb{N}$,
    \begin{equation}\label{eq:Marginal}
    \check{\mathbb{P}}_{\lambda^*}(X_1\in E_1,\cdots,X_n\in E_n)= \mathbb{P}_{\lambda}(X_1\in E_1,\cdots,X_n\in E_n).
    \end{equation}
\end{Prop}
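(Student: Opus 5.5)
I will prove \eqref{eq:Marginal} by induction on $n$, using a stronger hypothesis stated at the level of split measures. The key identity is that for every $x\in\mathcal{X}$ and $n\in\mathbb{Z}_+$, the split kernel averaged over the split of the Dirac mass at $x$ equals the split of the original one-step kernel:
\begin{equation*}
\int_{\check{\mathcal{X}}} \check{P}(n,w,n+1,\cdot)\,\epsilon_x^*(dw) = P(n,x,n+1,\cdot)^* .
\end{equation*}
I will check this by cases. If $x\notin\mathcal{C}(R)$, then $\epsilon_x^*=\epsilon_{x_0}$, and \eqref{construction1} gives the identity directly. If $x\in\mathcal{C}(R)$, then $\epsilon_x^*=(1-\beta)\epsilon_{x_0}+\beta\epsilon_{x_1}$. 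By \eqref{construction2} and \eqref{construction3} the average becomes
\[
(1-\beta)\frac{P(n,x,n+1,\cdot)^*-\beta\nu^*}{1-\beta}+\beta\nu^*=P(n,x,n+1,\cdot)^*,
\]
using that the splitting operation $\lambda\mapsto\lambda^*$ is linear. Integrating in $x$ against any measure $\lambda$ then yields $\int \check{P}(n,w,n+1,\cdot)\,\lambda^*(dw)=(P^*_{n,n+1}\lambda)^*$.

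Next I will strengthen the claim to the following: for all $E_1,\dots,E_n$ and every $A\in\mathcal{B}(\mathcal{X})$, $i\in\{0,1\}$,
\[
\check{\mathbb{P}}_{\lambda^*}\bigl(X_1\in E_1,\dots,X_{n-1}\in E_{n-1},\check{X}_n\in A_i\bigr)=\Bigl[\mathbb{P}_\lambda\bigl(X_1\in E_1,\dots,X_{n-1}\in E_{n-1},X_n\in\cdot\bigr)\Bigr]^*(A_i).
\]
In words, the joint law restricted to the path event, viewed as a measure in the last coordinate, is the split of the corresponding original measure. Summing over $i$ and using the marginal property $\lambda^*(A_0\cup A_1)=\lambda(A)$ recovers \eqref{eq:Marginal}.

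The induction step goes as follows. Let $\eta_{n-1}(\cdot):=\mathbb{P}_\lambda(X_1\in E_1,\dots,X_{n-1}\in E_{n-1},X_{n-1}\in\cdot)$, and let $\check\eta_{n-1}$ denote the analogous split-chain measure on $\check{\mathcal{X}}$. The induction hypothesis gives $\check\eta_{n-1}=\eta_{n-1}^*$. The Markov property of $\check{X}$, established in Proposition~\ref{Property1}, lets me write the left side at step $n$ as $\int \check{P}(n-1,w,n,A_i)\,\eta_{n-1}^*(dw)$. By the averaged identity above this equals $(P^*_{n-1,n}\eta_{n-1})^*(A_i)$. By the Markov property of $X$ in \eqref{MarkovChain}, this is exactly the right side. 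The base case is immediate, since $\check{X}_0\sim\lambda^*$.

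The main obstacle is a bookkeeping one: the restriction to $E_{n-1}$ must be written so that it commutes with splitting. I will check that $(\mathbf{1}_{E}\lambda)^*=\mathbf{1}_{E\times\{0,1\}}\lambda^*$. This holds because splitting acts pointwise in $x$: the factors $\beta$ and $1-\beta$ depend only on whether $x\in\mathcal{C}(R)$. With that established, the induction closes.
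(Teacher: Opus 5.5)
Your proof is correct, but your inductive step works differently from the paper's. The paper uses your averaged identity only for the base case, and only on sets $A_0\cup A_1$. For the step it works with conditional expectations: it uses \eqref{following1}--\eqref{following3} to integrate out $\delta_n$ and obtains $\check{\mathbb{E}}_{\lambda^*}[f(X_{n+1})\mid\mathcal{G}_n]=\int f(y)\,P(n,X_n,n+1,dy)$. In other words, under $\check{\mathbb{P}}_{\lambda^*}$ the first coordinate is Markov with kernels $P$ relative to $\mathcal{G}_n$.

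You instead carry a measure-level invariant: the law of $\check{X}_n$ on an $X$-path event is the split of the corresponding law of $X_n$. For this you need only three things: the Markov property of $\check{X}$ with kernel $\check{P}$, the full identity $\lambda^*\check{P}=(\lambda P)^*$ on every $A_i$, and the commutation $(\mathbf{1}_E\lambda)^*=\mathbf{1}_{E\times\{0,1\}}\lambda^*$. This is the classical splitting argument for one-time marginals, extended to finite-dimensional distributions.

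What your route buys:
\begin{itemize}
\item It needs only the kernels \eqref{construction1}--\eqref{construction3}. The conditional coin-flip law of $\delta_n$ given the $X$-path is carried by your induction hypothesis, not imported from the explicit randomization behind \eqref{following1}.
\item The conclusion is a single identity between measures, rather than the paper's comparison of conditional expectations taken under two different laws.
\end{itemize}

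What the paper's route buys: it gives directly the Markov property of $X$ with respect to the enlarged filtration $\mathcal{G}_n$. That is the form implicitly used in the proof of Theorem~\ref{GeoReturnSplit}, where Lemma~\ref{GeoReturnOriginal} is applied to $X$ under $\check{\mathbb{P}}$ conditionally on a past that contains the $\delta$'s.

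Your method yields this too after one more observation. Every $\check{P}(n-1,w,n,\cdot)$ is itself a split measure: of $P(n-1,x,n,\cdot)$, of $(P(n-1,x,n,\cdot)-\beta\nu)/(1-\beta)$, or of $\nu$. So the same computation goes through with the $X$-path event replaced by an arbitrary event in $\mathcal{F}_{n-1}$.
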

\begin{proof}
    First we let $k=1$ and consider the case of the Dirac point mass $\lambda= \epsilon_x$.
    When $x\in \mathcal{C}(R)^c$, we have, by~\eqref{construction1},
    $$\int_{\check{\mathcal{X}}} \epsilon_x^*(dy_i)\check{P}(0,y_i,1, A_0 \cup A_1)=\check{P}(0,x_0,1, A_0 \cup A_1)= P(0,x,1,A_0\cup A_1)^*=P(0,x,1,A).$$
    On the other hand, when $x\in \mathcal{C}(R)$. We have, from $\eqref{construction2}$ and \eqref{construction3},
    \begin{align*}
        &\int_{\check{\mathcal{X}}} \epsilon_x^*(dy_i)\check{P}(0,y_i,1, A_0 \cup A_1)\\
        &=\beta\check{P}(0,x_1,1,A_0\cup A_1)+(1-\beta)\check{P}(0,x_0,1,A_0\cup A_1)\\
        &=\beta\nu(A)+(1-\beta)\frac{P(0,x,1,A_0\cup A_1)^*-\beta\nu^*(A_0\cup A_1)}{1-\beta}\\
        &=P(0,x,1,A).
    \end{align*}
    Thus we have, for any initial distribution $\lambda$,
    $$\int_{\mathcal{X}}\lambda(dx)P(0,x,1,A)= \int_{\mathcal{X}}\lambda^*(dy_i)\check{P}(0,y_i,1,A_0\cup A_1),$$
    which implies $\mathbb{P}_{\lambda}(X_1\in E_1)=\check{\mathbb{P}}_{\lambda^*}(X_1\in E_1)$ for any $E_1\in \mathcal{B}(\mathcal{X})$. 
    
    Suppose we have~\eqref{eq:Marginal} for $n$ and
    $E_1,\cdots,E_n \in \mathcal{B}(\mathcal{X})$. Let $f:\mathcal{X}\to \mathbb{R}$ be any bounded measurable function. On $\{X_n \in \mathcal{C}(R)\}$, by the tower property we have
        $$\check{\mathbb{E}}_{\lambda^*}[f(X_{n+1}) \mid X_1,\cdots,X_n]=\check{\mathbb{E}}_{\lambda^*}[\check{\mathbb{E}}_{\lambda^*}[f(X_{n+1}) \mid \mathcal{G}_n] \mid X_1,\cdots,X_n].$$
        By \eqref{following2}, \eqref{following3} we have
        \begin{align*}
        \check{\mathbb{E}}_{\lambda^*}[f(X_{n+1}) \mid \mathcal{G}_n;\delta_n=1]&=\int_{\mathcal{X}} f(y)\nu(dy),\\
        \check{\mathbb{E}}_{\lambda^*}[f(X_{n+1}) \mid \mathcal{G}_n;\delta_n=0]&= \int_{\mathcal{X}} f(y)\frac{P(n,X_n,n+1,dy)-\beta \nu(dy)}{1-\beta}.
        \end{align*}
        Hence,
        \begin{align*}
            \check{\mathbb{E}}_{\lambda^*}[f(X_{n+1}) \mid \mathcal{G}_n] &= \beta\check{\mathbb{E}}_{\lambda^*}[f(X_{n+1}) \mid \mathcal{G}_n;\delta_n=1]+(1-\beta)\check{\mathbb{E}}_{\lambda^*}[f(X_{n+1}) \mid \mathcal{G}_n;\delta_n=0]\\
            &=\int_{\mathcal{X}}P(n,X_n,n+1,dy)f(y).
        \end{align*}
        On $\{X_n \notin \mathcal{C}(R)\}$, according to \eqref{following1} and \eqref{following3} we have
        $$\check{\mathbb{E}}_{\lambda^*}[f(X_{n+1}) \mid \mathcal{G}_n]=\int_{\mathcal{X}}P(n,X_n,n+1,dy)f(y).$$
    Combining the above argument, we have that on the whole sample space,
    $$\check{\mathbb{E}}_{\lambda^*}[f(X_{n+1}) \mid \mathcal{G}_n]=\int_{\mathcal{X}}P(n,X_n,n+1,dy)f(y),$$
    which implies $$\check{\mathbb{E}}_{\lambda^*}[f(X_{n+1}) \mid X_1,\cdots,X_n]=\mathbb{E}_{\lambda}[f(X_{n+1}) \mid X_1,\cdots,X_n].$$
    Thus we obtain~\eqref{eq:Marginal} for $n+1$ and $E_1,\cdots,E_{n+1}\in \mathcal{B}(\mathcal{X})$.
\end{proof}

\subsection{Return to small set with geometric tail's probability}

We denote:
\begin{align*}
    K_n(x,\cdot)&:= P(n-1,x,n,\cdot).
\end{align*}
Then, Doeblin's Condition becomes 
$$K_n(x,\cdot)\geq \beta \nu(\cdot)$$
for all $n \in \mathbb{Z}_+$ and $x\in \mathcal{C}(R)$. 
By the Drift Condition, we have
\begin{align*}
    K_n V(x)&=\int_{\mathcal{X}}P(n-1,x,n,dy) V(y)\\
    &\leq C+\gamma V(x)\\
    &\leq \gamma V(x) + C'.
\end{align*}
where we denote $C':=\frac{C}{1-\gamma}$. We will see that the Drift Condition yields uniformly return to $\mathcal{C}(R)\times\{1\}$ with geometric tails. 

\begin{Lem}\label{GeoReturnOriginal} Suppose Assumption~\ref{DriftCondition} and Assumption~\ref{DoeblinCondition} hold. We know that $R>\frac{C'}{1-\gamma}$. Note $\rho:=\gamma^{}+\frac{C'}{R}<1$. Then for every starting time $s$ and every initial state $x$(i.e., $X_s=x$), we have
$$\mathbb{P}_{s,x}\left(\tau>n\right)\leq \frac{V(x)}{R}\rho^{n}, \forall  n\in \mathbb{Z_+}$$
where $\tau:=\inf \left\{k\in\mathbb{N}:X_{s+k}\in \mathcal{C}(R)\right\}$.
In particular, $\mathbb{P}_{s,x}\left(\tau=\infty\right)=0$.
\end{Lem}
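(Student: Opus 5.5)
Our plan is the standard supermartingale argument built on the drift inequality. Let $\tau$ be the first time $k\ge 1$ with $X_{s+k}\in\mathcal{C}(R)$. Outside the small set we have $V(y)>R$, so the additive constant in $K_nV\le \gamma V+C'$ can be absorbed into the multiplicative factor:
\[
K_nV(y)\le \gamma V(y)+C'\le \Bigl(\gamma+\tfrac{C'}{R}\Bigr)V(y)=\rho V(y),\qquad y\notin\mathcal{C}(R),
\]
uniformly in $n$. The assumption $R>C'/(1-\gamma)$ is exactly what makes $\rho<1$.

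Next we define the process
\[
Z_k:=\rho^{-k}V(X_{s+k})\,\mathbf{1}\{\tau\ge k\}\qquad (k\ge 0).
\]
Here we use the convention $\{\tau\ge 0\}=\{\tau\ge 1\}=\Omega$ for $k=0,1$ (note $\tau\ge1$ always). Since $X$ is Markov with respect to its natural filtration, for $k\ge1$ we get
\[
\mathbb{E}_{s,x}\bigl[V(X_{s+k+1})\mathbf{1}\{\tau>k\}\mid X_s,\dots,X_{s+k}\bigr]=\mathbf{1}\{\tau>k\}\,K_{s+k+1}V(X_{s+k}).
\]
On $\{\tau>k\}$ we have $X_{s+k}\notin\mathcal{C}(R)$, so this is at most $\rho V(X_{s+k})\mathbf{1}\{\tau>k\}\le \rho V(X_{s+k})\mathbf{1}\{\tau\ge k\}$. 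Using $\{\tau\ge k+1\}=\{\tau>k\}$, this shows that $Z_k$ is a nonnegative supermartingale for $k\ge1$. Iterating gives
\[
\mathbb{E}_{s,x}[Z_n]\le \mathbb{E}_{s,x}[Z_1]\le \rho^{-1}(\gamma V(x)+C').
\]

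The first step $k=0\to1$ is the delicate point. The initial state $x$ may itself lie in $\mathcal{C}(R)$, and then only $K_{s+1}V(x)\le\gamma V(x)+C'$ is available, not $\rho V(x)$. For $x\notin\mathcal{C}(R)$ the bound $\mathbb{E}[Z_1]\le V(x)$ holds directly. For $x\in \mathcal{C}(R)$ we plan to compare $\gamma V(x)+C'$ with $\rho V(x)$. This works when $V(x)\ge R$; otherwise the constant term must be handled separately, possibly with a constant larger than $1/R$ in front of $V(x)$. We expect this to be the main obstacle, and we would present the argument cleanly for $x\notin\mathcal{C}(R)$ (the case used later, when the chain has just left the small set), noting that for $x\in\mathcal{C}(R)$ one obtains the same geometric rate with a constant of order $(\gamma V(x)+C')/R$.

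Assuming $\mathbb{E}_{s,x}[Z_n]\le V(x)$, we conclude by Markov's inequality. On $\{\tau>n\}$ we have $\tau\ge n$ and $X_{s+n}\notin\mathcal{C}(R)$, hence $V(X_{s+n})>R$. Therefore
\[
R\rho^{-n}\,\mathbb{P}_{s,x}(\tau>n)\le \mathbb{E}_{s,x}\bigl[\rho^{-n}V(X_{s+n})\mathbf{1}\{\tau>n\}\bigr]\le \mathbb{E}_{s,x}[Z_n]\le V(x),
\]
which is $\mathbb{P}_{s,x}(\tau>n)\le \frac{V(x)}{R}\rho^n$. Since $V(x)<\infty$ and $\rho<1$, letting $n\to\infty$ gives $\mathbb{P}_{s,x}(\tau=\infty)=\lim_n\mathbb{P}_{s,x}(\tau>n)=0$. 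If $V(x)=\infty$ the bound is trivial.
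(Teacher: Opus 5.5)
Your core argument is the paper's: absorb the constant outside $\mathcal{C}(R)$ to get $K_tV\le\rho V$ there, propagate $\mathbb{E}_{s,x}[V(X_{s+n})\mathbf{1}\{\tau>n\}]\le\rho^nV(x)$ (your supermartingale $Z_k$ is the same induction in different clothing), and finish with $V(X_{s+n})>R$ on $\{\tau>n\}$. The one real issue is the case $x\in\mathcal{C}(R)$, which you leave unresolved. It comes from reading $\mathbb{N}=\{1,2,\dots\}$ in the definition of $\tau$.

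The paper's proof treats $\tau$ as the hitting time \emph{including} $k=0$. Its base case is $\mathbb{E}_{s,x}[V(X_s)\mathbf{1}\{\tau>0\}]=V(x)\mathbf{1}\{x\notin\mathcal{C}(R)\}\le V(x)$, which holds for every $x$ at once. So for $x\in\mathcal{C}(R)$ one has $\tau=0$ and $\mathbb{P}_{s,x}(\tau>n)=0$. For $x\notin\mathcal{C}(R)$ the hitting and return times coincide, and your argument applies verbatim.

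Under your return-time reading, the stated inequality is actually false. At $n=0$ the left side is $1$, whereas $V(x)/R<1$ for any $x\in\mathcal{C}(R)$ with $V(x)<R$. So no separate handling of the constant term could give the factor $V(x)/R$ there. The right move is to recognize that the hitting-time convention is intended, not to leave this case as an open obstacle.

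Your fallback return-time estimate is correct, and in fact cleaner than you state. For $x\in\mathcal{C}(R)$ you have $\gamma V(x)+C'\le\gamma R+C'=\rho R$, so $\mathbb{E}_{s,x}[Z_1]\le R$, and hence $\mathbb{P}_{s,x}(\tau>n)\le\rho^n$ for all $n$.

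This return-from-inside bound is exactly what is needed when the lemma is invoked at the successive entrance times $\sigma_k$ in Theorem~\ref{GeoReturnSplit}, where $X_{s+\sigma_k}\in\mathcal{C}(R)$. Your parenthetical that the later use concerns starting points outside the small set is therefore inaccurate. The hitting-time version of the lemma yields the estimate needed there only after one extra Markov step, of precisely the kind you carried out.
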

\begin{proof}
    When $y\notin \mathcal{C}(R)$, we have $V(y) > \mathcal{C}(R)$. Then for any $t$, we have
    \begin{align*}
        K_tV(y)&\leq \gamma^{}V(y) + C' \\
        &= \left(\gamma^{{}}+\frac{C'}{V(y)}\right)V(y)\\
        &\leq \left(\gamma^{}+\frac{C'}{R}\right)V(y)\\
        &=\rho V(y).
    \end{align*}
    For any $n\geq 0$, on $\left\{\tau>n\right\}$ we have $X_{s+n}\notin \mathcal{C}(R)$, which implies $V(X_{s+n})>R$. Therefore, we have
    $$R\textbf{1}\left\{\tau>n\right\}\leq V(X_{s+n})\textbf{1}\left\{\tau>n\right\}.$$
    Taking expectation we have
    $$R\mathbb{P}_{s,x}\left(\tau>n\right)\leq \mathbb{E}_{s,x}\left[V(X_{s+n})\textbf{1}\left\{\tau>n\right\}\right].$$
    For $n=0$, we have
    $$\mathbb{E}_{s,x}\left[V(X_s)\textbf{1}\left\{\tau>0\right\}\right]= V(x)\textbf{1}\{x\notin \mathcal{C}(R)\}\leq V(x).$$
    Suppose for some $n\geq0$ we have
    $$\mathbb{E}_{s,x}\left[V(X_{s+n})\textbf{1}\left\{\tau>n\right\}\right]\leq \rho^n V(x).$$
    Then using the tower property and the fact that
    $$\textbf{1}\left\{\tau>n+1\right\}=\textbf{1}\left\{\tau>n\right\}\textbf{1}\left\{X_{s+n+1}\notin \mathcal{C}(R)\right\}\leq \textbf{1}\left\{\tau>n\right\},$$
    we obtain
    \begin{align*}
        &\,\,\,\,\,\,\,\,\mathbb{E}_{s,x}\left[V(X_{s+n+1})\textbf{1}\left\{\tau>n+1\right\}\right]\\
        &=\mathbb{E}_{s,x}\left[\textbf{1}\left\{\tau>n\right\}\mathbb{E}_{s,x}\left[V(X_{s+n+1})\textbf{1}\left\{X_{s+n+1}\notin \mathcal{C}(R)\right\} \mid \mathcal{F}_{n}^s\right]\right]\\
        &\leq \mathbb{E}_{s,x}\left[\textbf{1}\left\{\tau>n\right\}\mathbb{E}_{s,x}\left[V(X_{s+n+1}) \mid \mathcal{F}_{n}^s\right]\right]\\
        &=\mathbb{E}_{s,x}\left[\textbf{1}\left\{\tau>n\right\}K_{s+n+1}(X_{s+n})\right]  \\
        &\leq \mathbb{E}_{s,x}\left[\textbf{1}\left\{\tau>n\right\} \rho V(X_{s+n})\right] \\
        &= \rho\mathbb{E}_{s,x}\left[V(X_{s+n})\textbf{1}\left\{\tau>n\right\} \right]\\
        &\leq \rho \cdot \rho^nV(x)\\
        &=\rho^{n+1}V(x),
    \end{align*}
    where $\mathcal{F}_{n}^s:=\sigma(X_s,X_{s+1},\cdots, X_{s+n})$.
    Thus for all $n\geq0$, we have
    $$\mathbb{E}_{s,x}\left[V(X_{s+n})\textbf{1}\left\{\tau>n\right\}\right]\leq \rho^n V(x).$$
    Finally, we obtain
    $$\mathbb{P}_{s,x}\left(\tau>n\right)\leq \frac{1}{R}\mathbb{E}_{s,x}\left[V(X_{s+n})\textbf{1}\left\{\tau>n\right\}\right]\leq \frac{V(x)}{R}\rho^n.$$
\end{proof}

Now we lift such exponential tail of return time to the split chain.
\begin{Thm}\label{GeoReturnSplit}
    Suppose Assumption~\ref{DriftCondition} and Assumption~\ref{DoeblinCondition} hold. Then for every starting time $s$ and every initial state $x_i$(i.e., $\check{X}_s=x_i$) with $V(x)<\infty$, there exist constant $K>0$ and $0<\zeta<1$ such that
$$\check{\mathbb{P}}_{s,x_i}\left(\check{\tau}>n\right)\leq K \zeta^{n},\quad \text{for }\forall  n\in \mathbb{Z_+}$$
where $\check{\tau}:=\inf \left\{k\in\mathbb{Z}_+:\check{X}_{s+k}\in \mathcal{C}(R)\times\{1\}\right\}$.
In particular, $\check{\mathbb{P}}_{s,x_i}\left(\check{\tau}=\infty\right)=0$.
\end{Thm}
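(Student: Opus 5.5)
We combine the geometric return bound of Lemma~\ref{GeoReturnOriginal} with the fact that each visit of the split chain to $\mathcal{C}(R)$ has conditional probability $\beta$ of being an atom visit, by \eqref{following1}. The first difficulty is that after a visit to $\mathcal{C}(R)$ with $\delta=0$, the next position is drawn from the residual kernel $(P(n,x,n+1,\cdot)-\beta\nu)/(1-\beta)$ rather than from $P$. So we need a uniform bound on $V$ after one step from the small set. For $x\in\mathcal{C}(R)$ we have
\[\int V\,\frac{P(n,x,n+1,dy)-\beta\nu(dy)}{1-\beta}\le \frac{\gamma R+C'}{1-\beta}=:R_1,\]
and similarly $\int V\,d\nu\le(\gamma R+C')/\beta$, since $\beta\nu\le K_{n+1}(x,\cdot)$. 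Hence from any point of the small set, whatever the value of $\delta$, the expected value of $V$ at the next step is at most a constant $L$ independent of time.

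Next we define the successive hitting times of $\mathcal{C}(R)$ by $X$ after time $s$: $\tau_1\ge0$ is the first, and $\tau_{j+1}$ is the first time strictly after $\tau_j$. Between consecutive visits the split chain moves with the original kernel $P$, by \eqref{construction1}. By the strong Markov property and Lemma~\ref{GeoReturnOriginal} applied at time $s+\tau_j+1$ from state $X_{s+\tau_j+1}$, we get
\[\check{\mathbb P}(\tau_{j+1}-\tau_j-1>m\mid \mathcal{F}_{s+\tau_j+1})\le \frac{V(X_{s+\tau_j+1})}{R}\rho^m.\]
Taking expectations and using the one-step bound gives $\check{\mathbb E}[\rho_1^{-(\tau_{j+1}-\tau_j)}\mid\mathcal{F}_{s+\tau_j}]\le A$ for any $\rho<\rho_1<1$, with $A$ a constant. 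The same computation from time $s$ gives $\check{\mathbb E}[\rho_1^{-\tau_1}]\le 1+V(x)A'$.

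The main obstacle is that $A$ may exceed $1$, so a plain product bound over $j$ grows. The fix is to combine this with the geometric number of trials. At each $\tau_j$ the coin $\delta$ is independent of the past, with success probability $\beta$, so the number $N$ of small-set visits needed satisfies $\check{\mathbb P}(N>j)=(1-\beta)^j$. Conditioning on failures, and noting that a failure biases the residual-kernel step only through the bound $L$ already used, we obtain
\[\check{\mathbb E}\bigl[r^{-\check\tau}\bigr]\le (1+V(x)A')\sum_{j\ge1}\beta(1-\beta)^{j-1}A(r)^{j-1}.\]
By Hölder (or Jensen, since $r\mapsto A(r)$ is continuous with $A(1)=1$), choosing $r$ close enough to $1$ makes $A(r)(1-\beta)<1$, so the series converges. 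Markov's inequality then gives $\check{\mathbb P}(\check\tau>n)\le K\zeta^n$ with $\zeta=r$ and $K$ depending on $V(x)$. This dependence is allowed by the statement, which fixes $x$. Finally, $\check{\mathbb P}(\check\tau=\infty)=0$ follows by letting $n\to\infty$.

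To make the continuity of $A(r)$ at $r=1$ rigorous, we interpolate: $\check{\mathbb E}[r^{-T}]\le(\check{\mathbb E}[\rho_1^{-T}])^{\theta}$ with $r^{-1}=\rho_1^{-\theta}$, $\theta\to0$. This gives $A(r)\le A^{\theta}\to1$, uniformly in $j$ and in the starting time.
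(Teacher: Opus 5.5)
Your proposal is correct and follows essentially the same route as the paper. Both arguments decompose $\check\tau$ along the successive entrance times of $X$ into $\mathcal{C}(R)$, bound the conditional exponential moments of the inter-visit gaps uniformly, and sum against the geometric$(\beta)$ number of coin tosses, with the exponent chosen close enough to $0$ that $(1-\beta)$ times the moment bound is below $1$. Your one-step bound on $V$ under the residual kernel and under $\nu$, together with the Jensen interpolation $A(r)\le A^{\theta}$, justifies two points the paper only asserts: the uniform tail bound for $\sigma_{k+1}-\sigma_k$, and the limit $M(\theta)\to 1$ as $\theta\to 0$.
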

\begin{proof}
    Let $\sigma_0 := \tau$ and 
\[
\sigma_{k+1} := \inf\{t>\sigma_k : X_{s+t} \in \mathcal{C}(R)\}, \qquad k\ge0,
\]
denote the successive entrance times of the original chain into $\mathcal{C}(R)$.  
By Lemma~\ref{GeoReturnOriginal} and the strong Markov property,
there exist constants $C_1 < \infty$ and $\rho \in (0,1)$ which do not depend on $k$ such that
\[
\sup_{s,x}\,
\mathbb{P}_{s,x}\!\left(
\sigma_{k+1}-\sigma_k > n
\ \middle|\ 
\mathcal{F}_{\sigma_k}^s
\right)
\le C_1 \rho^{\,n},\qquad n\ge0.
\]
Hence there exist $\theta_0>0$ and $M<\infty$ such that
\begin{equation}\label{eq:mgf_bound}
\sup_{s,x}\,
\mathbb{E}_{s,x}\!\left[
e^{\theta_0(\sigma_{k+1}-\sigma_k)}
\ \middle|\ 
\mathcal{F}_{\sigma_k}^s
\right]
\le M.
\end{equation}

At each visit time $\sigma_k$, the split construction tosses an independent
$\mathrm{Bernoulli}(\beta)$ coin (conditionally on the past): with probability
$\beta$ the chain regenerates at layer~1, otherwise it stays in layer~0.  
Let
\[
G := \inf\{k\ge0 : \delta_{s+\sigma_k}=1\},
\]
so that $\check{\tau} = \sigma_G$.
Given the past, $G$ is geometric$(\beta)$ and independent of the inter-visit
increments.

Conditioning on $G$ and using~\eqref{eq:mgf_bound},
\begin{align*}
\check{\mathbb{E}}_{s,x_i}\!\left[
e^{\theta \check{\tau}}
\right]
&=
\check{\mathbb{E}}_{s,x_i}\!\left[
e^{\theta\sigma_0}
\prod_{j=0}^{G-1}
e^{\theta(\sigma_{j+1}-\sigma_j)}
\right]\\
&=\sum_{k=0}^{\infty}\check{\mathbb{E}}_{s,x_i}\!\left[e^{\theta\sigma_0}
\left(\prod_{j=0}^{k-1}
e^{\theta(\sigma_{j+1}-\sigma_j)}\right)\textbf{1}\{G=k\}\right]\\
&=\sum_{k=0}^{\infty}\check{\mathbb{E}}_{s,x_i}\!\left[e^{\theta\sigma_0}
W_{k-1}\right]\quad\left(\text{where }W_k:=\prod_{j=0}^{k}
e^{\theta(\sigma_{j+1}-\sigma_j)}\textbf{1}\{\delta_{s+\sigma_j}=0\}\right)\\
&=\sum_{k=0}^{\infty}\check{\mathbb{E}}_{s,x_i}\left[e^{\theta\sigma_0}W_{k-1}\right]\cdot \beta\\
&=\sum_{k=0}^{\infty}\check{\mathbb{E}}_{s,x_i}\left[e^{\theta\sigma_0}W_{k-2}\cdot \check{\mathbb{E}}_{s,x_i}\left[e^{\theta (\sigma_k -\sigma_{k-1})} \mid \mathcal{F}_{\sigma_{k-1}}^s\right]\right]\cdot (1-\beta)\beta\\
&\leq \sum_{k=0}^{\infty}\check{\mathbb{E}}_{s,x_i}\left[e^{\theta\sigma_0}W_{k-2}\right]\cdot M(1-\beta)\beta\\
&\leq \cdots \\
&\leq \sum_{k=0}^{\infty}\check{\mathbb{E}}_{s,x_i}[e^{\theta \sigma_0}]M^k(1-\beta)^k \beta .
\end{align*}

\noindent From Lemma~\ref{GeoReturnOriginal}, $\sigma_0$ has geometric tail if $V(x)<\infty$. Because $M(\theta)$ is continuous at $\theta=0$ with $M(0)=1$, we can choose
$0<\theta\le\theta_0$ so small that $(1-\beta)M<1$ and $\check{\mathbb{E}}_{s,x_i}[e^{\theta \sigma_0}] < \infty$.
Then the geometric series converges and
\[
\sup_{s,x_i}
\check{\mathbb{E}}_{s,x_i}\!\left[e^{\theta\check{\tau}}\right]
\le
\frac{\check{\mathbb{E}}_{s,x_i}[e^{\theta \sigma_0}]\beta}{1-(1-\beta)M}
<\infty.
\]
Thus, for all $n\ge0$,
\[
\check{\mathbb{P}}_{s,x_i}\!\left(
\check{\tau} > n
\right)
\le\check{\mathbb{E}}_{s,x_i}[e^{\theta \check{\tau}}]\cdot e^{-\theta n}
\le
\frac{\check{\mathbb{E}}_{s,x_i}[e^{\theta \sigma_0}]\beta}{1-(1-\beta)M}\, e^{-\theta n}
=:
K\,\zeta^{\,n},
\quad \zeta := e^{-\theta}\in(0,1).
\]
Letting $n\to\infty$, we have
$\check{\mathbb{P}}_{s,x_i}(\check{\tau}=\infty)=0$.
\end{proof}

\subsection{The proof of SLLN}

We let $\tau_0$ denote the first entrance time of the split chain to the set $\mathcal{C}(R)\times \{1\}$, and $\tau_k$ the $k^{\text{th}}$ entrance time to $\mathcal{C}(R)\times \{1\}$ subsequent to $\tau_0$. These random variables are defined inductively as 
\begin{align*}
    \tau_0&=\min (n\ge0: \delta_n=1),\\
    \tau_k &=\min(n > \tau_{k-1}:\delta_n=1), \quad\text{for }k\ge 1.
\end{align*}

With all the arguments so far, we obtain the sequence of independent, but not identically distributed regeneration cycles $(C_l)_{l\in \mathbb{Z}_+}$ almost surely, where 
$$C_l:=\{\tau_l+1,\tau_l+2,\cdots,\tau_{l+1}\}.$$
For each $k$ define
$$D^0_k(g):=\sum_{j=\tau_k +1}^{\tau_{k+1}}g(X_j),$$
where $g:\mathcal{X}\to\mathbb{R}$ is a bounded function. From Property~\ref{Property1}, every $X_{\tau_k+1}$ is regenerated by drawing $\nu$ and so independent of the past, we have that $(D^0_k)_{k\in \mathbb{Z}_+}$ is an independent random process.

Let $N(n)$ be the number of regenerations up to time $n$:
$$N(n):=\max\{k:\tau_k \le n\}.$$
Then we have
$$n= \tau_0+\sum_{l=0}^{N(n)-1}L_l + r(n),$$
where $L_l:=\tau_{l+1}-\tau_l$ is the length of each regeneration cycle $C_l$ and $r(n)$ is the remaining term. From Theorem~\ref{GeoReturnSplit}, We have the following result.
\begin{Cor}\label{UpToInfty}
For any initial state $x_i$ with $V(x)<\infty$, we have $N(n)\underset{a.s.}{\uparrow}\infty$.
\end{Cor}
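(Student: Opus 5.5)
Since $N(n)$ is nondecreasing in $n$, the limit $N(\infty):=\lim_{n\to\infty}N(n)$ exists in $\mathbb{Z}_+\cup\{\infty\}$. Moreover, for every $k$ we have $\{N(\infty)\le k\}\subset\{\tau_{k+1}=\infty\}$, because $N(n)\ge k+1$ as soon as $\tau_{k+1}\le n$. It therefore suffices to show $\check{\mathbb{P}}_{0,x_i}(\tau_k<\infty)=1$ for every $k\in\mathbb{Z}_+$, and I will prove this by induction on $k$ using Theorem~\ref{GeoReturnSplit}.

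\emph{Base case.} With $s=0$ and initial state $x_i$ satisfying $V(x)<\infty$, the random time $\tau_0$ is exactly $\check{\tau}$ of Theorem~\ref{GeoReturnSplit}. That theorem gives $\check{\mathbb{P}}_{0,x_i}(\tau_0=\infty)=0$.

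\emph{Induction step.} Assume $\tau_k<\infty$ a.s. On $\{\tau_k=m\}$ we have $\delta_m=1$. By \eqref{following2} and the independence statement of Proposition~\ref{Property1}, the post-$m$ process is then independent of $\mathcal{F}_m$, and $X_{m+1}\sim\nu$. Decomposing over the values of $\tau_k$ and of $X_{\tau_k+1}$ gives
\[
\check{\mathbb{P}}(\tau_{k+1}=\infty)=\sum_{m}\check{\mathbb{P}}(\tau_k=m)\int_{\mathcal{X}}\nu(dy)\,\check{\mathbb{P}}_{m+1,y^*}(\check{\tau}=\infty),
\]
where $\check{\mathbb{P}}_{m+1,y^*}$ is the split chain started at time $m+1$ from the split initial law $\epsilon_y^*$, and $\check\tau$ is its first hitting time of $\mathcal{C}(R)\times\{1\}$ counted from time $m+1$. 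This hitting time coincides with $\tau_{k+1}-(m+1)$ on $\{\tau_k=m\}$. Theorem~\ref{GeoReturnSplit} applies for each $y$ with $V(y)<\infty$, starting from $y_0$ or $y_1$ alike, so the integrand vanishes for $\nu$-a.e.\ $y$ as long as $\nu(\{V=\infty\})=0$. Hence $\tau_{k+1}<\infty$ a.s., and then $N(n)\uparrow\infty$ a.s.

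\emph{Main obstacle.} The one delicate point is justifying $\nu(\{V=\infty\})=0$. I would derive it from the Drift and Doeblin Conditions. Choose $z\in\mathcal{C}(R)$, which is nonempty by our choice of $R$, so that $V(z)\le R<\infty$. Then
\[
\beta\int V\,d\nu\le\int V(y)\,P(n-1,z,n,dy)\le\gamma R+C<\infty,
\]
so $\nu(V)<\infty$ and in particular $\nu(\{V=\infty\})=0$.

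A minor additional point is that Theorem~\ref{GeoReturnSplit} is stated with a deterministic starting time $s$, whereas $\tau_k$ is random. This is handled by the decomposition over $\{\tau_k=m\}$ above, which is legitimate because of the conditional independence in Proposition~\ref{Property1}.
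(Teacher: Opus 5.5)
Your proof is correct, but it argues differently from the paper. The paper works quantitatively. After restarting at $\tau_0$, it bounds $\check{\mathbb{P}}\{N(n)\le M\}\le\check{\mathbb{P}}\{\tau_M\ge n\}$ by a union bound over the $M$ cycle lengths, each distributed as $\check\tau$ under the $\nu^*$ start. This gives $\check{\mathbb{P}}\{N(n)\le M\}\le MK\zeta^{\lfloor n/M\rfloor}\to 0$, and monotonicity of $N(n)$ then upgrades this to almost sure divergence.

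The paper's route yields an explicit rate, but it implicitly needs the constant $K$ of Theorem~\ref{GeoReturnSplit} to hold for the initial law $\nu^*$ at the random regeneration times. As proved there, $K$ depends on the starting point through $\check{\mathbb{E}}_{s,x_i}[e^{\theta\sigma_0}]$, i.e.\ through $V(x)$. Using it from $\nu^*$ therefore requires integrating against $\nu$, which is exactly where $\nu(V)<\infty$ is needed, and the paper does not justify this.

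Your argument is purely qualitative. You only use $\check\tau<\infty$ a.s.\ from $\nu$-a.e.\ starting point, pointwise in $y$ with no uniformity in $K$. You handle the random time $\tau_k$ through the regeneration property of Proposition~\ref{Property1}, decomposing over $\{\tau_k=m\}$. You also make the integrability $\nu(\{V=\infty\})=0$ explicit via $\beta\,\nu(V)\le\gamma R+C$. This suffices for the almost sure statement and is more careful about its hypotheses.

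The uniform geometric tail of the cycle lengths is still needed later in the paper (Lemma~\ref{TwiceMoment}), so the quantitative bound is not wasted there. Your observation that $\nu(V)<\infty$ is precisely what makes that bound legitimate.
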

\begin{proof}
    By Theorem 4.5, we have $\check{\mathbb{P}}_{0,x_i}(\tau_0<\infty)=1$. Thus we can consider $\tau_0$ to be the initial time by the strong Markov property and modify $N(n)=\max\{k:\tau_k\leq \tau_0+n\}$ to prove the original result. For any $M\in \mathbb{N}$, we have
    \begin{align*}
        \check{\mathbb{P}}_{\tau_0}\{N(n) \le M\}&= \check{\mathbb{P}}_{\tau_0}\{\tau_M 
        \ge n\}\\
        &\le \check{\mathbb{P}}_{\tau_0}\left\{1\le \exists l\le M\,\,s.t.  \,\,\tau_{l}-\tau_{l-1}\ge \frac{n}{M}\,\,\right\} \\
        &\le M\check{\mathbb{P}}_{\nu^*}\left\{\check{\tau}>\left\lfloor \frac{n}{M} \right\rfloor\right\}\\
        &\le MK \zeta^{\left\lfloor \frac{n}{M} \right\rfloor}\xrightarrow{n\to\infty}0.
    \end{align*}
    Thus the proof is complete.
\end{proof}

 We will see that the length of each cycle has uniform twice moment.
\begin{Lem}\label{TwiceMoment} Under the condition of Theorem~\ref{GeoReturnSplit}, we have $\sup\limits_{l\in\mathbb{Z}_+}\check{\mathbb{E}}[L_l^2]< \infty$ and $\check{\mathbb{E}}[\tau_0^2]<\infty$.
\end{Lem}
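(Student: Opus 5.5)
The plan is to derive both moment bounds from the geometric tail in Theorem~\ref{GeoReturnSplit}, applying the strong Markov property at the regeneration times. For any nonnegative integer-valued $T$ with $\check{\mathbb{P}}(T>n)\le K\zeta^n$ we have
\[
\check{\mathbb{E}}[T^2]=\sum_{n\ge0}(2n+1)\check{\mathbb{P}}(T>n)\le K\sum_{n\ge0}(2n+1)\zeta^n<\infty,
\]
so it suffices to establish a geometric tail with constants $K,\zeta$ that do not depend on $l$.

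\emph{Step 1: $\tau_0$.} The chain starts from $\check{X}_0=x_i$ with $V(x)<\infty$, and $\tau_0=\check{\tau}$ with $s=0$. Theorem~\ref{GeoReturnSplit} then gives $\check{\mathbb{P}}(\tau_0>n)\le K\zeta^n$, and the displayed sum yields $\check{\mathbb{E}}[\tau_0^2]<\infty$.

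\emph{Step 2: the cycle lengths $L_l$.} At time $\tau_l$ we have $\delta_{\tau_l}=1$. By \eqref{following2} and the independence statement in Proposition~\ref{Property1}, conditionally on $\mathcal{F}_{\tau_l}$ the post-$\tau_l$ process starts at time $\tau_l+1$ from $\nu^*$ and evolves with the kernels $\check{P}(m,\cdot,m+1,\cdot)$, $m\ge\tau_l+1$. Hence
\[
\check{\mathbb{P}}(L_l>n+1\mid\mathcal{F}_{\tau_l})=\int\nu^*(dy_j)\,\check{\mathbb{P}}_{\tau_l+1,y_j}(\check{\tau}>n)\quad\text{on }\{\tau_l=t\}.
\]
Everything therefore reduces to bounding $\sup_s\check{\mathbb{P}}_{s,\nu^*}(\check{\tau}>n)\le K'\zeta^n$ uniformly in the starting time $s$. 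Integrating over $\tau_l$ and summing as above then gives $\sup_l\check{\mathbb{E}}[L_l^2]<\infty$.

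\emph{Step 3: uniformity in the initial law $\nu$ (main obstacle).} The constant $K$ in Theorem~\ref{GeoReturnSplit} comes from $\check{\mathbb{E}}_{s,x_i}[e^{\theta\sigma_0}]$. By Lemma~\ref{GeoReturnOriginal} this is bounded by $1+\frac{V(x)}{R}\sum_n e^{\theta(n+1)}\rho^n\le c_\theta(1+V(x))$ once $e^\theta\rho<1$, and the factor $\beta/(1-(1-\beta)M)$ does not depend on $s$ or $x$. So $K=K_0(1+V(x))$ with $K_0$ independent of $s$, and integrating against $\nu$ gives $K'=K_0(1+\nu(V))$. The remaining difficulty is to show $\nu(V)<\infty$. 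I would obtain this from the minorization together with the drift. Take any $y\in\mathcal{C}(R)$, which exists since $\mathcal{C}(R)\neq\emptyset$. Then
\[
\beta\,\nu(V)\le\int V(z)\,P(n-1,y,n,dz)\le\gamma R+C,
\]
so $\nu(V)\le(\gamma R+C)/\beta<\infty$. If a cleaner route is preferred, one can instead condition on $X_{\tau_l+1}$ and use that the first entrance time from $\nu$ has $\check{\mathbb{E}}[e^{\theta\sigma_0}]\le c_\theta(1+\nu(V))$, which is the same bound. I would also double-check that the bound~\eqref{eq:mgf_bound} used inside Theorem~\ref{GeoReturnSplit} is genuinely uniform in $s$. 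Each $\sigma_k$ with $k\ge1$ restarts from a point of $\mathcal{C}(R)$ after one step, so the drift gives conditional expected $V$ at most $\gamma R+C$, and hence a uniform bound $M$.
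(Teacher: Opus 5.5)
Your proposal is correct and follows essentially the same route as the paper: both use $\check{\mathbb{E}}[T^2]=\sum_{m\ge0}(2m+1)\check{\mathbb{P}}(T>m)$ together with the geometric tail of Theorem~\ref{GeoReturnSplit}, applied to $\tau_0$ directly and to $L_l$ through regeneration from $\nu^*$. Your Step 3 supplies details that the paper's one-line bound $\check{\mathbb{P}}(L_l>m)=\check{\mathbb{P}}_{\nu^*}(\check{\tau}>m)\le K\zeta^m$ takes for granted: the index shift $L_l=1+\check{\tau}$, uniformity in the random starting time $\tau_l+1$, the dependence $K\le K_0(1+V(x))$, and the finiteness $\nu(V)\le(\gamma R+C)/\beta$ obtained from the minorization.
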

\begin{proof}Since
    \begin{align*}
        \check{\mathbb{P}}(L_l>m) &= \check{\mathbb{P}}_{\nu^*}(\check{\tau}>m)
        \le K \zeta^m,
    \end{align*}
    we have
    \begin{align*}
        \check{\mathbb{E}}[L_l^2]&=\sum_{k=0}^{\infty} \check{\mathbb{P}}\{L^2_l > k\}\\
        &=\sum_{m=0}^{\infty} (2m+1)\check{\mathbb{P}}\{L_l >m\}\\
        &\le \sum_{m=0}^{\infty} (2m+1)K\zeta^m,
    \end{align*}
    which implies $\sup\limits_{l\in\mathbb{Z}_+}\check{\mathbb{E}}[L_l^2]< \infty$. By a similar argument, we have $\check{\mathbb{E}}[\tau_0^2]<\infty$.
\end{proof}

The LLN for independent but not identically distributed random variables plays a key role; see, e.g. Corollary 5.22 of Kallenberg~\cite{Kallenberg}.
\begin{Lem}
Assume that $X_1,X_2,
\cdots$ are independent with means $\mu_1,\mu_2,\cdots$ and variances $\sigma_1^2,\sigma_2^2,\cdots$ such that $\sum\limits_{k=1}^{\infty}\frac{\sigma_k^2}{k^2}<\infty$ called Kolmogorov's criterion. Then
$$\frac{X_1+\cdots+X_n-(\mu_1+\cdots+ \mu_n)}{n}\overset{a.s.}{\longrightarrow} 0.$$
\end{Lem}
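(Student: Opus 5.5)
The statement to prove is Kolmogorov's strong law for independent, non-identically distributed summands. I will argue it through the Kolmogorov two-series (convergence) theorem combined with Kronecker's lemma; I will not use a truncation argument.

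\emph{Step 1: centering.} Set $Y_k := X_k - \mu_k$. The $Y_k$ are independent, have mean zero and variance $\sigma_k^2$, and the claim becomes $\frac{1}{n}\sum_{k=1}^n Y_k \to 0$ a.s. I will then work with the rescaled variables $Z_k := Y_k/k$. These are independent and centered, with
$$\sum_{k=1}^\infty \mathrm{Var}(Z_k)=\sum_{k=1}^\infty \frac{\sigma_k^2}{k^2}<\infty$$
by Kolmogorov's criterion.

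\emph{Step 2: a.s.\ convergence of $\sum_k Z_k$.} Let $T_n:=\sum_{k=1}^n Z_k$. This is an $L^2$-bounded martingale with respect to $\sigma(X_1,\dots,X_n)$, because $\mathbb{E}[T_n^2]=\sum_{k\le n}\sigma_k^2/k^2$ is bounded. The martingale convergence theorem then gives a finite a.s.\ limit $T_\infty$. Alternatively, one can stay elementary and apply Kolmogorov's maximal inequality
$$\mathbb{P}\Big(\max_{m< j\le n}|T_j-T_m|>\varepsilon\Big)\le \varepsilon^{-2}\sum_{k=m+1}^n \sigma_k^2/k^2.$$
Letting $n\to\infty$ and then $m\to\infty$ shows that $(T_n)$ is a.s.\ Cauchy.

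\emph{Step 3: Kronecker's lemma.} Take any real sequence $(y_k)$ such that $\sum_k y_k/k$ converges. Summation by parts with $b_k=k$ gives
$$\frac{1}{n}\sum_{k=1}^n y_k = t_n - \frac{1}{n}\sum_{k=1}^{n-1} t_k, \qquad t_k:=\sum_{j\le k} y_j/j .$$
By Ces\`aro, the right-hand side tends to $t_\infty - t_\infty = 0$. I apply this pathwise on the full-measure event from Step 2, with $y_k=Y_k(\omega)$. This yields $\frac{1}{n}\sum_{k\le n}(X_k-\mu_k)\to 0$ a.s.

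\emph{Main obstacle.} The only substantive ingredient is Step 2, the almost sure (not merely $L^2$) convergence of the series. I would handle it with the maximal inequality, whose own proof uses a stopping-time decomposition at the first index where $|T_j|>\varepsilon$, together with independence. Steps 1 and 3 are routine. Since the paper only quotes this lemma (Kallenberg, Corollary 5.22), citing the standard proof would also suffice.
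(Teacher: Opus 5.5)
Your proof is correct. Centering, then getting a.s.\ convergence of $\sum_k (X_k-\mu_k)/k$ from the $L^2$-bounded martingale (or from Kolmogorov's maximal inequality), then applying Kronecker's lemma with $b_k=k$, is the standard argument; the paper gives no proof of its own and only cites Kallenberg's Corollary 5.22, which is established by this same variance-criterion-for-series plus Kronecker route.
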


We now obtain the SLLN in the equilibrium case.

\begin{Thm}\label{SLLN1}
Suppose Assumptions~\ref{DriftCondition} and ~\ref{DoeblinCondition} hold. Let $x\in \mathcal{X}$ be such that $V(x) <\infty$. Let $g:\mathcal{X}\to \mathbb{R}$ be a bounded measurable function. Then, for the time-inhomogeneous Markov chain $(X_n)_{n\in \mathbb{Z}_+}$ with $X_0\sim \mu_0$, we have
$$\frac{1}{n}\sum_{k=0}^{n-1}g(X_k)-\frac{1}{n}\sum_{k=0}^{n-1}\mu_k(g)\xrightarrow[n\to \infty]{\text{a.s.}}0.$$
\end{Thm}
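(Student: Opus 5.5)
We will prove Theorem~\ref{SLLN1} by a regeneration decomposition of the split chain combined with Kolmogorov's criterion. We run the split chain from $\mu_0^*$; by the marginal identity \eqref{eq:Marginal} the law of $(X_n)$ is the required one. One remark first: $\int V\,d\mu_0<\infty$ by Theorem~\ref{InvariantExistence}, so $V(X_0)<\infty$ a.s. Hence Theorem~\ref{GeoReturnSplit}, Corollary~\ref{UpToInfty} and Lemma~\ref{TwiceMoment} apply conditionally on $X_0$. For $\tau_0$ we use the integrated bound $\check{\mathbb P}(\tau_0>n)\le \int K(x)\zeta^n\,\mu_0(dx)$, which is finite because the constant $K$ in Theorem~\ref{GeoReturnSplit} grows at most linearly in $V(x)$ through Lemma~\ref{GeoReturnOriginal}.

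We decompose the centered sum $S_n:=\sum_{k=0}^{n-1}(g(X_k)-\mu_k(g))$ along the regeneration times. Set $\bar g_k:=g-\mu_k(g)$, so $|\bar g_k|\le 2\|g\|_\infty$, and
$$Z_l:=\sum_{j=\tau_l+1}^{\tau_{l+1}}\bar g_j(X_j).$$
The sum $S_n$ then splits into three pieces: the initial block $\sum_{k\le\tau_0}$, the full cycles $\sum_{l<N(n)}Z_l$, and the remainder, which has at most $L_{N(n)}$ terms. The two boundary terms are bounded by $2\|g\|_\infty(\tau_0+1)$ and $2\|g\|_\infty L_{N(n)}$. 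The first divided by $n$ tends to $0$ because $\tau_0<\infty$ a.s. For the second, Lemma~\ref{TwiceMoment} and Markov's inequality give $\sum_l\check{\mathbb P}(L_l>\varepsilon l)<\infty$, so Borel--Cantelli yields $L_l/l\to0$ a.s. Since $N(n)\le n$, it follows that $L_{N(n)}/n\to0$.

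For the cycle sums, Proposition~\ref{Property1} says that the $Z_l$ are independent, since each cycle starts afresh from $\nu$ at time $\tau_l+1$, and the time offset is determined by the past. To make this rigorous we must condition on $\tau_l$. Because of time inhomogeneity the cycle law depends on the starting time, so we will verify independence of the pairs $(L_l,Z_l)$ given the regeneration epochs via the explicit uniform construction. We also have $|Z_l|\le 2\|g\|_\infty L_l$, so $\sup_l \mathrm{Var}(Z_l)<\infty$ by Lemma~\ref{TwiceMoment}, and Kolmogorov's criterion holds. The independent-nonidentical LLN then gives
$$\frac1m\sum_{l<m}(Z_l-\check{\mathbb E}Z_l)\to0\quad\text{a.s.},$$
and combining this with $N(n)\le n$ and $N(n)\to\infty$ (Corollary~\ref{UpToInfty}) handles the fluctuation part.

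The main obstacle is the mean part $\frac1n\sum_{l<N(n)}\check{\mathbb E}[Z_l]$, more precisely the conditional means given the start epochs. In the homogeneous case these vanish by Kac's formula, but here the cycle means need not be zero. We do not try to show cycle-wise centering. Instead we use the exponential ergodicity (E2) and stationarity $X_k\sim\mu_k$, which give $\check{\mathbb E}\,\bar g_k(X_k)=0$ exactly, so $\check{\mathbb E}S_n=0$. We then replace the random-index argument by a second-moment bound: as in the proof of Theorem~\ref{WLLN}, (E2) gives $\mathrm{Var}(S_n)=O(n)$. By Chebyshev and Borel--Cantelli along $n=m^2$, $S_{m^2}/m^2\to0$ a.s., and for $m^2\le n<(m+1)^2$ we interpolate using $|S_n-S_{m^2}|\le 2\|g\|_\infty(2m+1)$, which is $o(m^2)$. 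This uniform-moment route may in fact make the regeneration part unnecessary in the stationary case, so we will present it as the primary argument. It needs only $\sup_n\int V\,d\mu_n<\infty$; if that is not available from the hypotheses, we fall back on the regeneration decomposition above, where the remaining difficulty is to show that the drift of the cycle means, summed over the cycles and divided by $n$, tends to zero.
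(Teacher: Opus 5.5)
Your primary argument is correct and complete, and it takes a genuinely different route from the paper's. The paper works with the split chain. It cuts the path at the regeneration times $\tau_l$, treats the centered cycle sums $D_l$ as independent with $\sup_l\check{\mathbb{E}}[D_l^2]<\infty$, and applies Kolmogorov's criterion along $N(n)\to\infty$. The initial block and the incomplete last cycle are handled via $L_l/l\to0$.

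You instead upgrade the proof of Theorem~\ref{WLLN} to an almost sure statement. Stationarity gives $\mathbb{E}S_n=0$. Property (E2) gives $|\mathrm{Cov}(g(X_i),g(X_j))|\le\|g\|_\infty^2\tilde M\alpha^{j-i}(1+\mu_i(V))$, hence $\mathrm{Var}(S_n)=O(n)$. Chebyshev with Borel--Cantelli along $n=m^2$, plus the interpolation bound $|S_n-S_{m^2}|\le 2\|g\|_\infty(2m+1)$, finishes the argument.

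Both hedges you left open are closed by the hypotheses:
\begin{itemize}
\item (E2) is available because Doeblin's Condition implies the Contraction Condition with $n_0=1$, $\delta=\beta$, as observed at the start of Section 5.1. So Theorem~\ref{InvariantExistence} applies.
\item The uniform moment is automatic. Integrating the drift inequality against the invariant family gives $\mu_n(V)\le\gamma\,\mu_{n-1}(V)+C$, whence $\sup_{n\ge0}\mu_n(V)\le\mu_0(V)+C/(1-\gamma)<\infty$, with $\mu_0(V)<\infty$ from Theorem~\ref{InvariantExistence}.
\end{itemize}
So the regeneration fallback, including the remark on how $K$ depends on $V(x)$, can be dropped entirely.

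On what each route buys: yours needs only Drift and Contraction, with no minorization and no splitting, and it is a few lines long. More importantly, it avoids the two points you yourself flagged, which the paper's regeneration argument passes over.
\begin{itemize}
\item Because the kernels depend on time, the law of a cycle depends on its random starting epoch $\tau_l$. That epoch is a function of the earlier cycles, so the $D_l$ are not unconditionally independent.
\item Kolmogorov's criterion only yields $\frac1m\sum_{l<m}(D_l-\check{\mathbb{E}}D_l)\to0$. Nothing in the paper shows that the cycle means $\check{\mathbb{E}}D_l$ vanish or average out, and there is no per-cycle Kac-type identity in the time-inhomogeneous setting.
\end{itemize}
Once those points are repaired, the regeneration structure is what one would want for non-stationary starts or finer limit theorems such as a CLT. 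For the stationary statement at hand, your second-moment argument is both the cleaner proof and the fully rigorous one.
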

\begin{proof}
We denote:
$$S_n=\sum_{i=0}^{n}g(X_i)-\sum_{i=0}^{n}\mu_i(g)=\sum_{i=0}^{\tau_0}\left(g(X_i)-\mu_i(g)\right)+\sum\limits_{l=0}^{N(n)-1}D_l+R(n), $$
where $$D_l:=D^0_l-\sum_{j=\tau_l +1}^{\tau_{l+1}}\mu_j(g),$$
and $R(n)$ is the remaining term. Then we have
$$\frac{S_n}{n+1}= \frac{\sum\limits_{i=0}^{\tau_0}g(X_i)+\sum\limits_{l=0}^{N(n)-1}D_l+R(n)}{n+1}=\frac{\frac{1}{N(n)}\sum\limits_{i=0}^{\tau_0}g(X_i)+\frac{1}{N(n)}\sum\limits_{l=0}^{N(n)-1}D_l+\frac{R(n)}{N(n)}}{\frac{n+1}{N(n)}}.$$
From Lemma~\ref{TwiceMoment} and that $g$ is bounded, we have $$\sup\limits_{l\in\mathbb{Z}_+} \check{\mathbb{E}}[D_l^2]\leq\sup\limits_{l\in\mathbb{Z}_+}\|g\|^2_{\infty}\check{\mathbb{E}}[L_l^2]<\infty$$which implies $\sup\limits_{l\in\mathbb{Z}_+}Var(D_l)<\infty$. Thus $(D_l)_{l\in \mathbb{Z}_+}$ satisfy Kolmogorov's criterion and we obtain
$$\frac{1}{N(n)}\sum\limits_{l=0 }^{N(n)-1}D_l\overset{a.s.}{\longrightarrow}0.$$
We claim that $\frac{R(n)}{N(n)}\overset{a.s.}{\longrightarrow}0$. Since for any $\varepsilon>0$, by Chebyshev's inequality, we have
$$\check{\mathbb{P}}(L_l>\varepsilon l)
\leq \frac{\sup _{l\in \mathbb{Z}_+}\check{\mathbb{E}}[L^2_l]}{\varepsilon^2 l^2},$$
which implies
$$\sum^{\infty}_{l=0}\check{\mathbb{P}}(L_l>\varepsilon l)<\infty.$$
By Borel-Cantelli lemma we have
$$\frac{L_l}{l}\overset{a.s.}{\longrightarrow}0.$$
Since the fact $r(n)<L_{N(n)}$, we have
$$\frac{r(n)}{N(n)}<\frac{L_{N(n)}}{N(n)}\overset{a.s.}{\longrightarrow}0,$$
which implies
$$\frac{R(n)}{N(n)}\leq\|g\|_{\infty}\frac{r(n)}{N(n)}\overset{a.s.}{\longrightarrow}0.$$
Since $\check{\mathbb{P}}(\tau_0=\infty)=0$ by Lemma~\ref{TwiceMoment}, we obtain
$$\limsup_{n\to\infty}\frac{S_n}{n+1}\leq \lim_{n\to\infty}\frac{1}{N(n)}\sum_{i=0}^{\tau_0}g(X_i) + \lim\limits_{n\to\infty}\frac{1}{N(n)}\sum\limits_{j=0}^{N(n)-1}D_j+\lim\limits_{n\to \infty}\frac{R(n)}{N(n)} =0.$$
The proof is complete.
\end{proof}

We now proceed to the proof of Theorem~\ref{generalLLN} as a consequence of Theorem~\ref{SLLN1} by using the coupling method based on the exponential ergodicity.

\begin{proof}[Proof of Theorem~\ref{generalLLN}]
    From the exponential ergodicity of Theorem~\ref{InvariantExistence} and by Goldstein's theorem (see Theorem 14.10 of Lindvall~\cite{Lindvall}), there exists a maximal coupling of two chains: 
    $$X^{(x)} \text{ with } X^{(x)}_0=x,\qquad X^{(\mu)} \text{ with } X^{(\mu)}_0\sim \mu_0$$
    such that the coupling time:
    $$T:=\inf\{N\in \mathbb{Z}_+:X_n^{(x)}=X_n^{(\mu)}\quad\text{for }n\ge N\}$$
    is almost surely finite. Consequently, for any bounded function $g:\mathcal{X} \to \mathbb{R}$, we have
    $$\left|\frac{1}{n} \sum_{k=0}^{n-1} \left(g(X^{(x)}_k)-g(X^{(\mu)}_k)\right)\right|\xrightarrow[n\to \infty]{a.s.}0,$$
    and combining with the SLLN under $X_0\sim \mu_0$, we have
    $$   \frac{1}{n}\sum_{k=0}^{n-1} g(X_k)-\frac{1}{n}\sum_{k=0}^{n-1} \mu_k(g)
    \;\xrightarrow[n \to \infty]{\ \text{a.s.}\ }\; 0 ,$$
    which complete the proof.
\end{proof}

\section*{Acknowledgments}
A. Lau is grateful to the Graduate School of Science, The University of Osaka, for the scholarship for international students. This research was supported by ISM 2025-ISMCRP-5007. The research of K. Yano was supported by JSPS KAKENHI grant no.'s JP24K06781, JP24K00526 and JP21H01002, and by JSPS Open Partnership Joint Research Projects grant no. JPJSBP120249936.


\begin{thebibliography}{99}

\bibitem{Doeblin}
V.~Bansaye, B.~Cloez, and P.~Gabriel.
\newblock Ergodic behavior of non-conservative semigroups via generalized
Doeblin's Conditions.
\newblock \emph{Acta Appl Math} \textbf{166}, 29--72, 2020.

\bibitem{Martin}
M. Hairer.
\newblock {\em Ergodic Properties of Markov Processes}.
\newblock Lecture Notes, July 2018.

\bibitem{HallHeyde1980}
P.~Hall and C.~C.~Heyde.
\newblock {\em Martingale Limit Theory and Its Application}.
\newblock Academic Press, New York, 1980.

\bibitem{Kallenberg}
O.~Kallenberg.
\newblock {\em Foundations of Modern Probability}, 3rd ed.
\newblock Springer, New York, 2021.

\bibitem{KemenySnell1976}
J.~G.~Kemeny and J.~L.~Snell.
\newblock {\em Finite Markov Chains}.
\newblock Springer, New York, 1976.

\bibitem{Lindvall}
T.~Lindvall.
\newblock {\em Lectures on the Coupling Method}.
\newblock Dover Publications, 2002.   

\bibitem{LiuLu2025}
Z.~Liu and D.~Lu.
\newblock Ergodicity of inhomogeneous Markov processes under general criteria.
\newblock {\em Frontiers of Mathematics in China}, to appear (2025).
\newblock DOI: 10.1007/s11464-023-0102-1.

\bibitem{MeynTweedie2009}
S.~P.~Meyn and R.~L.~Tweedie.
\newblock {\em Markov Chains and Stochastic Stability}, 2nd ed.
\newblock Cambridge University Press, 2009.

\bibitem{Nummelin}
E.~Nummelin.
\newblock A splitting technique for Harris recurrent Markov chains.
\newblock {\em Z. Wahrscheinlichkeitstheorie verw. Gebiete}, 43:309--318, 1978.

\bibitem{Vassiliou}
P.-C.G. Vassiliou.
\newblock Law of Large Numbers for Non-Homogeneous Markov Systems.
\newblock \emph{Methodol Comput Appl Probab} \textbf{22}, 1631--1658,
2020.

\end{thebibliography}
\end{document}